\documentclass[10pt; a4paper]{amsart} 
\usepackage{amscd, amsfonts, amsmath, amssymb, amsthm, arydshln, fixmath, graphicx, mathrsfs, overpic}
\usepackage{hyperref}
\usepackage[all]{xy} 
\usepackage{tikz}
\usepackage{pgfplots}
\usepackage{pgflibraryarrows}
\usepackage{pgflibrarysnakes}
\makeindex

\theoremstyle{definition} 
\newtheorem{Unity}{Unity}[section] 
\newtheorem*{Definition*}{Definition} 
\newtheorem{Definition}[Unity]{Definition} 

\theoremstyle{plain} 
\newtheorem*{Theorem*}{Theorem}
\newtheorem{Theorem}[Unity]{Theorem}
\newtheorem{Proposition}[Unity]{Proposition}
\newtheorem{Corollary}[Unity]{Corollary}
\newtheorem{Lemma}[Unity]{Lemma}

\theoremstyle{remark} 
\newtheorem*{Remark*}{Remark}

\numberwithin{Unity}{section}


\newcommand{\Q}{\mathbb{Q}}
\newcommand{\PP}{\mathbb{P}}
\newcommand{\E}{\mathscr{E}}
\newcommand{\Ec}{\mathcal{E}}
\newcommand{\Lc}{\mathcal{L}}

\newcommand{\F}{\mathscr{F}}
\newcommand{\G}{\mathscr{G}}
\newcommand{\I}{\mathscr{I}}
\newcommand{\Ls}{\mathscr{L}}
\newcommand{\M}{\mathfrak{M}}

\newcommand{\Ox}{\mathscr{O}}
\newcommand{\Ps}{\mathscr{P}}
\newcommand{\Id}{\mathrm{Id}}
\newcommand{\rk}{\mathrm{rk}}
\newcommand{\Deg}{\mathrm{deg}}
\newcommand{\HNP}{\mathrm{HNP}}
\newcommand{\Jac}{\mathrm{Jac}}
\newcommand{\Pic}{\mathrm{Pic}}
\newcommand{\Frob}{\mathrm{Frob}}
\newcommand{\Quot}{\mathrm{Quot}}
\newcommand{\Spec}{\mathrm{Spec}}
\newcommand{\Supp}{\mathrm{Supp}}
\newcommand{\Omg}{\mathrm{\Omega}}

\newcommand{\Vect}{\mathfrak{Vect}}
\newcommand{\ConPgn}{\mathfrak{ConPgn}}

\begin{document}

\title{Frobenius Stratification of Moduli Spaces of Rank $3$ Vector Bundles in Characteristic $3$, I}
\author{Lingguang Li}
\address{School of Mathematical Sciences, Tongji University, Shanghai, P. R. China}
\email{LiLg@tongji.edu.cn}
\subjclass[2010]{14G17, 14H60, 14D20}
\keywords{Moduli spaces, Vector bundles, Frobenius morphism, Stratification.}
\thanks{Partially supported by National Natural Science Foundation of China (Grant No. 11501418) and Shanghai Sailing Program.}

\begin{abstract}
Let $X$ be a smooth projective curve of genus $g\geq 2$ over an algebraically closed field $k$ of
characteristic $p>0$, $F_X:X\rightarrow X$ the absolute Frobenius morphism. Let $\M^s_X(r,d)$ be the moduli space of stable vector bundles of rank $r$ and degree $d$ on $X$. We study the Frobenius stratification of $\M^s_X(3,0)$ in terms of Harder-Narasimhan polygons of Frobenius pull backs of stable vector bundles and obtain the irreducibility and dimension of each non-empty Frobenius stratum in the case $(p,g)=(3,2)$.
\end{abstract}
\maketitle
$$Dedicated~To~The~Memory~of~Professor~Michel~Raynaud.$$

\section{Introduction}

Let $k$ be an algebraically closed field of characteristic $p>0$, $X$ a smooth projective curve of genus $g$ over $k$. The absolute Frobenius morphism $F_X:X\rightarrow X$ is induced by $\Ox_X\rightarrow \Ox_X$, $f\mapsto f^p$. Let $\M^s_X(r,d)$ (resp. $\M^{ss}_X(r,d)$) be the moduli space of stable (resp. semistable) vector bundles of rank $r$ and degree $d$ on $X$.

Let $\E$ be a vector bundle on $X$, and
$$\mathrm{HN}_\bullet(\E):0=\E_0\subset\E_1\subset\cdots\subset\E_{m-1}\subset\E_m=\E$$
the Harder-Narasimhan filtration of $\E$. Consider the points
$$(\rk(\E_i),\Deg(\E_i))(0\leq i\leq m)$$ in the coordinate plane of $\mathrm{rank}$-$\mathrm{degree}$, we connect the point $(\rk(\E_i),\Deg(\E_i))$ to the point $(\rk(\E_{i+1}),\Deg(\E_{i+1}))$ successively by line segment for $0\leq i\leq m-1$. Then we get a convex polygon in the plane which we call the \emph{Harder-Narasimhan Polygon} of $\E$, denoted by $\HNP(\E)$.

Let $(r,d)$ be a point in the coordinate plane of $\mathrm{rank}$-$\mathrm{degree}$. If $r\leq\rk(\E)$, and $d\geq(\leq)d^\prime$ for some point $(r,d^\prime)\in\HNP(\E)$, then we say $(r,d)$ \emph{lies above $($below$)$} the $\HNP(\E)$. Given two convex polygons $\Ps_1$ and $\Ps_2$, if for any vertex on $\Ps_1$ lies above $\Ps_2$, then we say $\Ps_1$ lies above $\Ps_2$, denoted by $\Ps_1\succcurlyeq\Ps_2$. There is a natural partial order structure, in the sense of $\succcurlyeq$, on the set $\{\HNP(\E)~|~\E\in\Vect_X(r,d)~\}$, where $\Vect_X(r,d)$ is the category of vector bundles of rank $r$ and degree $d$ on $X$.

In general, the semistability of vector bundles is possibly destabilized under Frobenius pull back $F^*_X$ (cf. \cite{Gieseker73}, \cite{Raynaud82}). Thus, there is a natural set-theoretic map
\begin{eqnarray*}
S^s_{\Frob}:\M^s_X(r,d)(k)&\rightarrow&\ConPgn(r,pd)\\
{[\E]}&\mapsto&\HNP(F^*_X(\E))
\end{eqnarray*}
where $\ConPgn(r,pd)$ is the partially ordered set of all convex polygons in the coordinate plane such that their vertexes have integral coordinates, start at the origin $(0,0)$ and end at the point $(r,pd)$. For any $\Ps\in\ConPgn(r,pd)$, we denote
\begin{eqnarray*}
S_X(r,d,\Ps)&:=&\{[\E]\in\M^s_X(r,d)(k)~|~\HNP(F^*_X(\E))=\Ps\}.\\
S_X(r,d,\Ps^+)&:=&\{[\E]\in\M^s_X(r,d)(k)~|~\HNP(F^*_X(\E))\succcurlyeq\Ps\}.
\end{eqnarray*}
Then we have a canonical stratification of $\M^s_X(r,d)$ by Harder-Narasimhan polygons of Frobenius pull backs of stable vector bundles of rank $r$ and degree $d$. We call this the \emph{Frobenius stratification}.
By a theorem of S. S. Shatz \cite[Theorem 3]{Shatz77} and the geometric invariant theory construction of $\M^s_X(r,d)$, we know that the Frobenius stratum $S_X(r,d,\Ps^+)$ is a closed subvariety of $\M^s_X(r,d)$ for any $\Ps\in\ConPgn(r,pd)$.

The main goal of the paper is to study the geometric properties of Frobenius strata, such as non-emptiness, irreducibility, connectedness, smoothness, dimension and so on. Some results are known in special cases for small values of $p$, $g$, $r$ and $d$. In particular, Joshi-Ramanan-Xia-Yu \cite{JRXY06} give a complete description of the Frobenius stratification of $\M^s_{X}(2,d)$ for any integer $d$ when $p=2$ and $g\geq 2$. They obtain the irreducibility and dimension of each non-empty Frobenius stratum. Fix integers $r$ and $d$ with $r>0$, we denote
$$W_X(r,d):=\{[\E]\in\M^s_{X}(r,d)(k)|\HNP(F^*_X(\E))\succcurlyeq\HNP(F^*_X(\F))~\text{for any}~[\F]\in\M^s_{X}(r,d)(k)\}.$$
Then $W_X(r,d)$ is a closed subvariety of $\M^s_{X}(r,d)$. Joshi-Ramanan-Xia-Yu \cite[Theorem 4.6.4]{JRXY06} show that $W_X(2,d)$ is irreducible and of dimension $g$ when $p=2$ and $g\geq 2$. Moreover, under the assumption $p>r(r-1)(g-1)$, Joshi and Pauly \cite[Theorem 6.2.1]{JoshiPauly15} show that the dimension of $W_X(r,0)$ is $g$. Other results about Frobenius stratification can be found in \cite{JX00}\cite{LanP08}\cite{LasP02}\cite{LasP04}\cite{Oss06}\cite{Zh17}.

In \cite{Li14} the author shows that $W_X(p,d)$ is a closed subvariety of $\M^s_{X}(p,d)$ which is isomorphic to the Jacobian variety $\Jac_X$ of $X$ for any integer $d$ (cf. \cite[Theorem 3.2]{Li14}). In particular, $W_X(p,d)$ is an smooth irreducible projective variety of dimension $g$. Combining \cite[Theorem 1.1]{LiuZhou13} with \cite[Theorem 2.5]{Li14}, we can obtain the geometric properties of a specific Frobenius stratum.

\begin{Theorem}$($Theorem \ref{GeneralStratum}$)$
Let $k$ be an algebraically closed field of characteristic $p>0$, $X$ a smooth projective curve of genus $g\geq 2$ over $k$. Then the subset $$V_{rp,d}=\{[\E]\in\M^s_X(rp,d)(k)~|~\HNP(F^*_X(\E))=\Ps^{\mathrm{can}}_{rp,d}\}$$
is a smooth irreducible closed subvariety of dimension $r^2(g-1)+1$ in $\M^s_X(rp,d)$.
\end{Theorem}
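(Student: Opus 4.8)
The plan is to realise $V_{rp,d}$ as the image of a moduli space of \emph{rank $r$} stable bundles under the Frobenius push-forward $F_{X*}$, and then read off its geometry from that of the smaller moduli space.

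First I would fix the correct degree: set $e:=d-r(p-1)(g-1)$. For a bundle $\mathcal V$ of rank $r$ and degree $e$ on $X$, the sheaf $F_{X*}\mathcal V$ has rank $rp$, and since $F_X$ is finite one has $\chi(F_{X*}\mathcal V)=\chi(\mathcal V)$; Riemann--Roch then forces $\Deg(F_{X*}\mathcal V)=d$. Next I would recall that the canonical connection on $F_X^*F_{X*}\mathcal V$ produces the canonical filtration whose successive quotients are the twists $\mathcal V\otimes\Omega_X^{\otimes j}$, $0\le j\le p-1$. Because $g\ge 2$ these have pairwise distinct slopes, and when $\mathcal V$ is semistable they are themselves semistable, so this filtration is in fact the Harder--Narasimhan filtration; hence $\HNP(F_X^*F_{X*}\mathcal V)=\Ps^{\mathrm{can}}_{rp,d}$ for \emph{every} stable $\mathcal V$, i.e. the push-forward lands in the single stratum $V_{rp,d}$.

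The two cited inputs then close the argument. By \cite[Theorem 1.1]{LiuZhou13}, $F_{X*}\mathcal V$ is stable whenever $\mathcal V$ is, and $[\mathcal V]\mapsto[F_{X*}\mathcal V]$ defines a closed immersion $\Phi\colon\M^s_X(r,e)\hookrightarrow\M^s_X(rp,d)$; by the computation above $\Phi$ factors through $V_{rp,d}$. Conversely, \cite[Theorem 2.5]{Li14} shows that every $[\E]\in V_{rp,d}$ is isomorphic to $F_{X*}\mathcal V$ for some stable $\mathcal V$ of rank $r$, necessarily of degree $e$. Hence $\Phi$ is a closed immersion whose image is exactly $V_{rp,d}$, so it is an isomorphism $\M^s_X(r,e)\xrightarrow{\ \sim\ }V_{rp,d}$; in particular $V_{rp,d}$ is closed in $\M^s_X(rp,d)$. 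Since $g\ge2$, the variety $\M^s_X(r,e)$ is non-empty, smooth and irreducible of dimension $r^2(g-1)+1$, and these properties transport to $V_{rp,d}$, which is the assertion.

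I expect the main obstacle to be the \emph{scheme-theoretic} --- not merely set-theoretic --- identification $\M^s_X(r,e)\cong V_{rp,d}$: a bijective morphism onto the smooth, hence normal, variety $\M^s_X(rp,d)$ need not be an isomorphism, so one genuinely needs the full strength of \cite[Theorem 1.1]{LiuZhou13} (that $\Phi$ is an immersion, equivalently that its differential is injective, which rests on the deformation theory of Frobenius direct images and the canonical filtration) together with the surjectivity statement \cite[Theorem 2.5]{Li14}. Granting those two facts, the only remaining verifications are the Riemann--Roch degree bookkeeping and the observation that the canonical filtration of $F_X^*F_{X*}\mathcal V$ is Harder--Narasimhan for every stable $\mathcal V$ (so that $\Phi$ meets a single stratum); both are routine and use only $g\ge 2$.
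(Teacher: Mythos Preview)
Your proof is correct and essentially identical to the paper's: realise $V_{rp,d}$ as the image of the Frobenius push-forward morphism $\M^s_X(r,e)\to\M^s_X(rp,d)$ and then invoke the two cited inputs to see that this morphism is a closed immersion whose image is exactly $V_{rp,d}$. One small correction: you have the attributions reversed --- the closed-immersion statement is due to Li~\cite{Li14} (recorded here as Lemma~\ref{FrobImage}), while the characterisation of stable bundles with $\HNP(F_X^*\E)=\Ps^{\mathrm{can}}_{rp,d}$ as Frobenius direct images of stable bundles is Liu--Zhou~\cite{LiuZhou13} (Lemma~\ref{Lemma:CanPolygon}); the latter already contains both directions, so your separate canonical-filtration argument for one inclusion, while correct, is subsumed by that citation.
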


In this paper, we mainly study the Frobenius stratification of $\M^s_X(3,0)$, where $X$ is a smooth projective curve of genus $2$ over an algebraically closed field $k$ of characteristic $3$. In this case, there are $4$ possible Harder-Narasimhan polygons $\{\Ps_i\}_{1\leq i\leq 4}$ for Frobenius pull backs of Frobenius destabilized semistable vector bundles of rank $3$ and degree $0$ (See Section $2$). We obtain the irreducibility and dimension of each non-empty Frobenius stratum in the moduli space $\M^s_X(3,0)$ when $(p,g)=(3,2)$. The structure of Frobenius stratification of $\M^s_X(3,d)$ is easily deduced from $\M^s_X(3,0)$, when $3|d$.

In general, it is difficult to determine the Harder-Narasimhan polygon of $F^*_X(\E)$ for a stable bundle $\E$ on $X$. We first show that any rank $3$ and degree $0$ Frobenius destabilized stable vector bundle $\E$ with $\HNP(F^*_X(\E))\in\{\Ps_2,\Ps_3,\Ps_4\}$ can be embedded into ${F_X}_*(\Ls)$ for some line bundle $\Ls$ of degree $-1$, when $(p,g)=(3,2)$ (Proposition \ref{Prop:Injection}). Then we can determine $\HNP(F^*_X(\E))$ by analysing the intersection of $F^*_X(\E)$ with the canonical filtration of $F^*_X({F_X}_*(\Ls))$. Moreover, we show that any rank $3$ and degree $0$ subsheaf $\E$ of ${F_X}_*(\Ls)$ is semistable for any line bundle $\Ls$ of degree $-1$ (Proposition \ref{Prop:Subsheaf}). Then we have a morphism
\begin{eqnarray*}
\theta:\Quot_X(3,0,\Pic^{(-1)}(X))&\rightarrow&\M^{ss}_X(3,0)\\
{[\E\hookrightarrow{F_X}_*(\Ls)]}&\mapsto&[\E],
\end{eqnarray*}
where $\Quot_X(3,0,\Pic^{(-1)}(X))$ is the Quot scheme parameterizing all rank $3$ and degree $0$ subsheaves of ${F_X}_*(\Ls)$ for any line bunde $\Ls\in\Pic^{(-1)}(X)$. Restricting the morphism $\theta$ to the stable locus $\Quot^s_X(3,0,\Pic^{(-1)}(X))$ of $\Quot_X(3,0,\Pic^{(-1)}(X))$, we can obtain the geometric properties of Frobenius strata of $\M^s_X(3,0)$ from the geometric properties of Frobenius strata of $\Quot^s_X(3,0,\Pic^{(-1)}(X))$. These techniques are generalizations of the methods which are first introduced by Joshi-Ramanan-Xia-Yu in \cite{JRXY06}.

The main result of this paper is the following Theorem.

\begin{Theorem}$($Theorem \ref{Thm:FrobStra}$)$
Let $k$ be an algebraically closed field of characteristic $3$, $X$ a smooth projective curve of genus $2$ over $k$. Then
\begin{itemize}
    \item[$(1)$] $S_X(3,0,\Ps^+_1)\cong S_X(3,0,\Ps^+_2)$, $S_X(3,0,\Ps_1)\cong S_X(3,0,\Ps_2)$, and $$S_X(3,0,\Ps^+_1)\cap S_X(3,0,\Ps^+_2)=S_X(3,0,\Ps^+_3).$$
    \item[$(2)$] $S_X(3,0,\Ps^+_i)=\overline{S_X(3,0,\Ps_i)}$, $S_X(3,0,\Ps_i)$ and $S_X(3,0,\Ps^+_i)$ are irreducible quasi-projective varieties for $1\leq i\leq 4$, and $$\dim S_X(3,0,\Ps^+_i)=\dim S_X(3,0,\Ps_i)=
\begin{cases}
5, \mathrm{when}~i=1\\
5, \mathrm{when}~i=2\\
4, \mathrm{when}~i=3\\
2, \mathrm{when}~i=4\\
\end{cases}$$
\end{itemize}
\end{Theorem}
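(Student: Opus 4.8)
The plan is to deduce the structure of the Frobenius stratification of $\M^s_X(3,0)$ from the corresponding structure on the Quot scheme $\Quot^s_X(3,0,\Pic^{(-1)}(X))$ via the morphism $\theta$, using the three key inputs already set up in the excerpt: (i) every Frobenius destabilized stable bundle $\E$ with $\HNP(F^*_X(\E))\in\{\Ps_2,\Ps_3,\Ps_4\}$ embeds into ${F_X}_*(\Ls)$ for some $\Ls\in\Pic^{(-1)}(X)$ (Proposition \ref{Prop:Injection}); (ii) every rank $3$, degree $0$ subsheaf of such an ${F_X}_*(\Ls)$ is semistable (Proposition \ref{Prop:Subsheaf}); and (iii) the closedness statement from Shatz's theorem, so that $S_X(3,0,\Ps^+_i)$ is automatically closed and $\overline{S_X(3,0,\Ps_i)}\subseteq S_X(3,0,\Ps^+_i)$. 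The first step is to pin down the four polygons $\Ps_1\succcurlyeq\Ps_2\succcurlyeq\Ps_3\succcurlyeq\Ps_4$ explicitly: since $F^*_X(\E)$ has rank $3$ and degree $0$ with instability measured against the canonical filtration of $F^*_X({F_X}_*(\Ls))$ (which in characteristic $p=3$, genus $2$ has known slopes), one reads off which HN-types $(r_i,d_i)$ are possible for a subsheaf, and orders them. This is essentially the content promised for Section $2$, so I would quote it.

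Next I would analyze $\theta$ stratum by stratum. For each $i\in\{2,3,4\}$, let $Q_i\subseteq\Quot^s_X(3,0,\Pic^{(-1)}(X))$ be the locus where the subsheaf $\E\hookrightarrow{F_X}_*(\Ls)$ has the property that $F^*_X(\E)$ meets the canonical filtration in the pattern forcing $\HNP(F^*_X(\E))=\Ps_i$; by semicontinuity of the intersection ranks this is a locally closed condition, and $Q_i^+:=\bigsqcup_{j\leq i}Q_j$ is closed in the Quot scheme. Propositions \ref{Prop:Injection} and \ref{Prop:Subsheaf} together say that $\theta(Q_i)=S_X(3,0,\Ps_i)$ and $\theta(Q_i^+)=S_X(3,0,\Ps^+_i)$ for $i\geq 2$ (surjectivity from (i), well-definedness into $\M^{ss}_X$ and landing in the stable locus from (ii) plus the stability hypothesis). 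One then computes $\dim Q_i$ as (dimension of $\Pic^{(-1)}(X)$) $+$ (dimension of the relevant flag-type/Hecke-modification parameter space), and determines the generic fiber dimension of $\theta|_{Q_i}$ — the fibers are the projectivizations of the spaces of embeddings of a fixed $\E$ into the various ${F_X}_*(\Ls)$, i.e.\ governed by $\Hom$ computations that by adjunction become $H^0(F^*_X(\E)^\vee\otimes\Ls)$-type groups — to get $\dim S_X(3,0,\Ps_i)=\dim Q_i-(\text{generic fiber dim})$. Irreducibility of $S_X(3,0,\Ps_i)$ follows once $Q_i$ is shown irreducible (a $\Pic^{(-1)}(X)$-bundle over an irreducible flag/Hecke space, or conversely), since the image of an irreducible variety is irreducible; and then $S_X(3,0,\Ps^+_i)=\overline{S_X(3,0,\Ps_i)}$ because $S_X(3,0,\Ps^+_i)$ is closed, contains the dense-in-itself $S_X(3,0,\Ps_i)$, and the strictly smaller strata $S_X(3,0,\Ps_j)$ with $j>i$ must lie in the closure by a dimension/specialization argument (their dimensions $4>2$ etc.\ nest correctly, and Shatz upper-semicontinuity forces them into the boundary).

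For part $(1)$ the key is the duality $\E\mapsto\E^\vee$ on $\M^s_X(3,0)$: since $\HNP(F^*_X(\E^\vee))$ is the ``reflection'' of $\HNP(F^*_X(\E))$ and, among $\Ps_1,\ldots,\Ps_4$, the polygons $\Ps_1$ and $\Ps_2$ are interchanged by this reflection while $\Ps_3$ (and $\Ps_4$) are self-dual, the isomorphism $\E\mapsto\E^\vee$ carries $S_X(3,0,\Ps^+_1)$ onto $S_X(3,0,\Ps^+_2)$ and $S_X(3,0,\Ps_1)$ onto $S_X(3,0,\Ps_2)$. The identity $S_X(3,0,\Ps^+_1)\cap S_X(3,0,\Ps^+_2)=S_X(3,0,\Ps^+_3)$ then amounts to the combinatorial fact that $\Ps_3$ is the join (least upper bound of all lower bounds, i.e.\ the smallest polygon lying above... actually the largest polygon below) both $\Ps_1$ and $\Ps_2$ in the poset $\ConPgn(3,0)$ — equivalently, a polygon lies above both $\Ps_1$ and $\Ps_2$ iff it lies above $\Ps_3$ — combined with the fact that $[\E]$ with $\HNP(F^*_X(\E))\succcurlyeq\Ps_1$ and $\succcurlyeq\Ps_2$ forces $\succcurlyeq\Ps_3$, which is immediate from this poset statement since $\HNP(F^*_X(\E))$ is a single polygon.

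The stratum $i=1$ requires separate treatment since Proposition \ref{Prop:Injection} only covers $\{\Ps_2,\Ps_3,\Ps_4\}$: here I would use the self-duality-inherited description $S_X(3,0,\Ps_1)\cong S_X(3,0,\Ps_2)$ to transport irreducibility and the dimension value $5$ directly from the $i=2$ case, and similarly $S_X(3,0,\Ps^+_1)=\overline{S_X(3,0,\Ps_1)}$ from the corresponding statement for $\Ps_2$. I expect the main obstacle to be the fiber-dimension computation of $\theta$ on each $Q_i$: one must control precisely how many line bundles $\Ls\in\Pic^{(-1)}(X)$ admit an embedding $\E\hookrightarrow{F_X}_*(\Ls)$ for a fixed $\E$, and the dimension of the space of such embeddings up to scalar — this is where the characteristic-$3$, genus-$2$ numerics enter unavoidably (via Riemann--Roch on $X$ and the explicit slopes in the canonical filtration of $F^*_X{F_X}_*(\Ls)$), and where the bookkeeping is most delicate because the answer differs across $i=2,3,4$ and must conspire to produce the dimensions $5,4,2$. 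A secondary subtlety is verifying that $Q_i$ lands in the \emph{stable} locus of $\M^s_X(3,0)$ rather than merely the semistable locus — one needs that a generic subsheaf in $Q_i$ is actually stable, not just semistable, which should follow from a dimension count showing the strictly-semistable locus is of strictly smaller dimension, but must be checked.
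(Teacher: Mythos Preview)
Your overall strategy matches the paper's, but the paper sidesteps both places you flag as delicate. For $i=2,3$ the paper does not compute generic fiber dimensions of $\theta^s$ at all: instead it restricts to the open locus $\Quot^{s,\sharp}_X(3,0,\Pic^{(-1)}(X))$ (where the adjoint map $F^*_X(\E)\to\Ls$ is surjective) and observes that there $\Ls$ is forced to be the unique minimal-slope line quotient in the Harder--Narasimhan filtration of $F^*_X(\E)$, so $\theta^s|_{\sharp}$ is \emph{injective} and the dimension transfers directly from the Quot-scheme computation (which gives $5,4,3$ for the $\Ps^+_i$-loci via an explicit local analysis identifying the fibers over $X\times\Pic^{(-1)}(X)$ with $\PP^2_k\supset\PP^1_k\supset\{\mathrm{pt}\}$). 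For $i=4$ the paper abandons the Quot-scheme route entirely: it invokes the characterization that $\HNP(F^*_X(\E))=\Ps_4$ iff $\E\cong {F_X}_*(\Ls')$ for some $\Ls'\in\Pic^{(-2)}(X)$, and then uses that $[\Ls']\mapsto[{F_X}_*(\Ls')]$ is a closed immersion $\Jac_X\hookrightarrow\M^s_X(3,0)$, giving dimension $2$ and smoothness for free. Your approach of computing the fiber of $\theta^s$ over $S_X(3,0,\Ps_4)$ would also work (the fiber is a copy of $X$, since for $\E={F_X}_*(\Ls')$ the admissible $\Ls$ are exactly the $\Ls'(x)$), yielding $3-1=2$, but the paper's argument is more direct and gives the identification with $\Jac_X$.

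One small slip: you wrote $\Ps_1\succcurlyeq\Ps_2\succcurlyeq\Ps_3\succcurlyeq\Ps_4$, but in fact $\Ps_1$ and $\Ps_2$ are incomparable (they are swapped by duality, as you later note correctly) and $\Ps_4\succcurlyeq\Ps_3\succcurlyeq\Ps_1,\Ps_2$; the rest of your write-up is consistent with the correct ordering, so this appears to be a typo rather than a misunderstanding.
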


In section 2, we show that there are $4$ possible Harder-Narasimhan polygons $\{\Ps_1,\Ps_2,\Ps_3,\Ps_4\}$ for Frobenius destabilized semistable vector bundles of rank $3$ and degree $0$ in the case $(p,g)=(3,2)$.

In section 3, we show that any Frobenius destabilized stable bundle $\E$ of rank $3$ and degree $0$ with $\HNP(F^*_X(\E))\in\{\Ps_2,\Ps_3,\Ps_4\}$ can be embedded into ${F_X}_*(\Ls)$ for some line bundle $\Ls$ of degree $-1$. Moreover, we show that for any line bundle $\Ls$ of degree $-1$ on $X$, each rank $3$ and degree $0$ subsheaf $\E\subset {F_X}_*(\Ls)$ is semistable.

In section 4, we will study the Frobenius stratification of the Quot scheme $\Quot_X(3,d,\Pic^{(d-1)}(X))$ and obtain the smoothness, irreducibility and dimension of each non-empty stratum.

In section 5, we study the Frobenius stratification of moduli space $\M^s_X(3,0)$ when $(p,g)=(3,2)$. We obtain the geometric properties of Frobenius strata in $\M^s_X(3,0)$ from the geometric properties of Frobenius strata in $\Quot^s_X(3,0,\Pic^{(-1)}(X))$ by the morphism $\theta^s:\Quot^s_X(3,0,\Pic^{(-1)}(X))\rightarrow\M^s_X(3,0):{[\E\hookrightarrow{F_X}_*(\Ls)]}\mapsto[\E]$. Moreover, we obtain the geometric properties of a special Frobenius stratum in $\M^s_{X}(rp,d)$ for any integers $p$, $g$, $r$, $d$ with $r>0$, $p>0$ and $g\geq2$.

\section{Classification of Frobenius Harder-Narasimhan Polygons}

In this section, we will determine all of the possible Harder-Narasimhan polygons of $F^*_X(\E)$ for any Frobenius destabilized semistable vector bundles $\E\in\M^s_X(3,0)$, where $X$ is a smooth projective curve of genus $2$ over an algebraically closed field $k$ of characteristic $3$.
\begin{Lemma}[N. I. Shepherd-Barron \cite{Shepherd-Barron98} and V. Mehta, C. Pauly \cite{MehtaPauly07}]\label{Lemma:InsHN}
Let $k$ be an algebraically closed field of characteristic $p>0$, $X$ a smooth projective curve of genus $g\geq 2$ over $k$, $\E$ a semistable vector bundle on $X$. Let $0=\E_0\subset\E_1\subset\cdots\subset\E_{m-1}\subset\E_m=F^*_X(\E)$ be the Harder-Narasimhan filtration of $F^*_X(\E)$. Then
\begin{itemize}
\item[$(1)$] For any $1\leq i\leq m-1$, $\mu(\E_i/\E_{i-1})-\mu(\E_{i+1}/\E_i)\leq 2g-2$.
\item[$(2)$] $\mu_{\max}(F^*_X(\E))-\mu_{\min}(F^*_X(\E))\leq\min\{r-1,p-1\}\cdot(2g-2)$.
\end{itemize}
\end{Lemma}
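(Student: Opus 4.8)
The plan is to work with the canonical connection $\nabla\colon F_X^*(\E)\to F_X^*(\E)\otimes\Omega_X^1$ on the Frobenius pull back, whose sheaf of horizontal sections is $F_X^{-1}(\E)$. Recall two standard facts: $\nabla$ has vanishing $p$-curvature, and (Cartier descent) a subbundle $\mathcal V\subseteq F_X^*(\E)$ satisfies $\nabla(\mathcal V)\subseteq\mathcal V\otimes\Omega_X^1$ if and only if $\mathcal V=F_X^*(\mathcal V_0)$ for a subbundle $\mathcal V_0\subseteq\E$, in which case $\mu(\mathcal V)=p\cdot\mu(\mathcal V_0)$. These are the only inputs beyond elementary slope estimates.

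For $(1)$, fix $1\le i\le m-1$. Since $\E_i$ destabilizes $F_X^*(\E)$ we have $\mu(\E_i)>\mu(F_X^*(\E))=p\cdot\mu(\E)$, so $\E_i$ cannot be $\nabla$-stable: otherwise $\E_i=F_X^*(\mathcal V_0)$ with $\mu(\mathcal V_0)=\mu(\E_i)/p>\mu(\E)$, contradicting the semistability of $\E$. Hence the composite
\[
\psi_i\colon\E_i\xrightarrow{\ \nabla\ }F_X^*(\E)\otimes\Omega_X^1\longrightarrow\big(F_X^*(\E)/\E_i\big)\otimes\Omega_X^1
\]
is a non-zero $\Ox_X$-linear morphism (it is $\Ox_X$-linear because the Leibniz term $s\otimes df$ lies in $\E_i\otimes\Omega_X^1$). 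A non-zero morphism of coherent sheaves on a curve forces $\mu_{\min}$ of the source to be at most $\mu_{\max}$ of the target; since $\mu_{\min}(\E_i)=\mu(\E_i/\E_{i-1})$ and $\mu_{\max}((F_X^*(\E)/\E_i)\otimes\Omega_X^1)=\mu(\E_{i+1}/\E_i)+(2g-2)$, this gives $\mu(\E_i/\E_{i-1})-\mu(\E_{i+1}/\E_i)\le 2g-2$.

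For $(2)$, write $\mu_j:=\mu(\E_j/\E_{j-1})$, so $\mu_1>\cdots>\mu_m$. Telescoping $(1)$,
\[
\mu_{\max}(F_X^*(\E))-\mu_{\min}(F_X^*(\E))=\mu_1-\mu_m=\sum_{j=1}^{m-1}(\mu_j-\mu_{j+1})\le(m-1)(2g-2)\le(r-1)(2g-2),
\]
since the Harder--Narasimhan filtration of a rank $r$ bundle has at most $r$ steps. To replace $r-1$ by $p-1$ it suffices to show $m\le p$, and here the nilpotency of $\nabla$ enters: in a local coordinate $x$ one has $\nabla_{\partial_x}^{\,p}=0$, because the $p$-curvature vanishes and $(\partial_x)^p=0$ as a derivation of $\Ox_X$ in characteristic $p$. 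Consequently the increasing chain $\E_1\subseteq\E_1^{[1]}\subseteq\E_1^{[2]}\subseteq\cdots$, where $\E_1^{[k+1]}$ is the $\Ox_X$-submodule generated by $\E_1^{[k]}$ and $\nabla(\E_1^{[k]})$, stabilizes after at most $p-1$ steps at a $\nabla$-stable subsheaf; moreover $\E_1^{[k+1]}/\E_1^{[k]}$ is a quotient of $\E_1^{[k]}$ twisted by $(\Omega_X^1)^{-1}$, so each step lowers $\mu_{\min}$ by at most $2g-2$. A careful analysis of this prolongation procedure — equivalently, of the position of the Harder--Narasimhan filtration of $F_X^*(\E)$ relative to $\nabla$ — together with the semistability of $\E$ (and of $\E^\vee$) yields $m\le p$; this sharp bookkeeping is exactly what is carried out in \cite{Shepherd-Barron98} and \cite{MehtaPauly07}, and we follow their argument.

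The genuine obstacle is this last point: part $(1)$ is a soft slope estimate, but extracting the sharp coefficient $\min\{r-1,p-1\}$ requires controlling precisely how far $\nabla$ can push a semistable subsheaf before it becomes horizontal, balancing the local nilpotency $\nabla_{\partial_x}^{\,p}=0$ against the degree $2g-2$ of $\Omega_X^1$. We remark that for the applications in this paper, where $(p,g)=(3,2)$ and $r=3$, one has $\min\{r-1,p-1\}=r-1$ and $m\le 3$ automatically, so there the statement $(2)$ already follows from the telescoping argument alone.
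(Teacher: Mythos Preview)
The paper does not give its own proof of this lemma: it is stated with attribution to \cite{Shepherd-Barron98} and \cite{MehtaPauly07} and used as a black box, so there is no in-paper argument to compare your proposal against.

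On its own merits, your argument for part~(1) is correct and is the standard one: for $1\le i\le m-1$ the strict inequality $\mu(\E_i)>p\,\mu(\E)$ holds by convexity of the Harder--Narasimhan polygon, so Cartier descent rules out $\nabla$-stability of $\E_i$, and the resulting non-zero $\Ox_X$-linear map $\E_i\to(F_X^*(\E)/\E_i)\otimes\Omega_X^1$ gives the slope bound. The telescoping for the $(r-1)(2g-2)$ half of~(2) is also fine. For the $(p-1)(2g-2)$ half you sketch a prolongation argument but ultimately defer to the same references the paper cites; that is acceptable here, though be aware that your sketch as written does not quite close: the chain $\E_1^{[k]}$ need not coincide with the Harder--Narasimhan filtration, so the passage from ``stabilizes after $\le p-1$ steps at a horizontal subsheaf'' to ``$m\le p$'' requires the additional bookkeeping you allude to. Your final remark that for $(p,g,r)=(3,2,3)$ the bound $m\le r=3$ already suffices is correct and is indeed all the paper needs downstream.
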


According to the Lemma \ref{Lemma:InsHN}, in the case $(p,g,r,d)=(3,2,3,0)$, there are $4$ possible Harder-Narasimhan polygons for all Frobenius destabilized semistable vector bundles of rank $3$ and degree $0$ when $(p,g)=(3,2)$ as the following:

\begin{center}
\begin{tabular}{cc}
\begin{picture}(150,100)
\linethickness{0.5pt}
\put(0,0){\vector(1,0){140}} 
\put(0,0){\vector(0,1){95}} 
\multiput(0,0)(40,0){4}{\line(0,1){2}}
\put(38,-8){1}
\put(78,-8){2}
\put(118,-8){3}
\multiput(0,0)(0,40){3}{\line(1,0){2}}
\put(-8,37){1}
\put(-8,77){2}
\put(125,5){\tiny{rank}}
\put(2,90){\tiny{deg}}
\thinlines
\multiput(0,40)(4,0){30}{\line(1,0){2}} 
\put(0,0){\line(1,1){40}} 
\put(40,40){\line(2,-1){80}}
\put(125,40){\tiny{$\Ps_1$}}
\end{picture}
&
\begin{picture}(150,100)
\linethickness{0.5pt}
\put(0,0){\vector(1,0){140}} 
\put(0,0){\vector(0,1){95}} 
\multiput(0,0)(40,0){4}{\line(0,1){2}}
\put(38,-8){1}
\put(78,-8){2}
\put(118,-8){3}
\multiput(0,0)(0,40){3}{\line(1,0){2}}
\put(-8,37){1}
\put(-8,77){2}
\put(125,5){\tiny{rank}}
\put(2,90){\tiny{deg}}
\thinlines
\multiput(0,40)(4,0){30}{\line(1,0){2}} 
\put(0,0){\line(2,1){80}} 
\put(80,40){\line(1,-1){40}}
\put(125,40){\tiny{$\Ps_2$}}
\end{picture}
\end{tabular}
\end{center}

\begin{center}
\begin{tabular}{cc}
\begin{picture}(150,100)
\linethickness{0.5pt}
\put(0,0){\vector(1,0){140}} 
\put(0,0){\vector(0,1){95}} 
\multiput(0,0)(40,0){4}{\line(0,1){2}}
\put(38,-8){1}
\put(78,-8){2}
\put(118,-8){3}
\multiput(0,0)(0,40){3}{\line(1,0){2}}
\put(-8,37){1}
\put(-8,77){2}
\put(125,5){\tiny{rank}}
\put(2,90){\tiny{deg}}
\thinlines
\multiput(0,40)(4,0){30}{\line(1,0){2}} 
\put(0,0){\line(1,1){40}} 
\put(40,40){\line(1,0){40}} 
\put(80,40){\line(1,-1){40}}
\put(125,40){\tiny{$\Ps_3$}}
\end{picture}
&
\begin{picture}(150,100)
\linethickness{0.5pt}
\put(0,0){\vector(1,0){140}} 
\put(0,0){\vector(0,1){95}} 
\multiput(0,0)(40,0){4}{\line(0,1){2}}
\put(38,-8){1}
\put(78,-8){2}
\put(118,-8){3}
\multiput(0,0)(0,40){3}{\line(1,0){2}}
\put(-8,37){1}
\put(-8,77){2}
\put(125,5){\tiny{rank}}
\put(2,90){\tiny{deg}}
\thinlines
\multiput(0,80)(4,0){30}{\line(1,0){2}} 
\put(0,0){\line(1,2){40}} 
\put(40,80){\line(1,0){40}} 
\put(80,80){\line(1,-2){40}}
\put(125,40){\tiny{$\Ps_4$}}
\end{picture}
\end{tabular}
\end{center}

\section{Construction of Stable Vector Bundles}

\begin{Definition}[Joshi-Ramanan-Xia-Yu \cite{JRXY06}]\label{CanFil}
Let $k$ be an algebraically closed field of characteristic $p>0$, $X$ a smooth projective curve over $k$. For
any coherent sheaf $\F$ on $X$, let
$$\nabla_{\mathrm{can}}:F^*_X{F_X}_*(\F)\rightarrow
F^*_X{F_X}_*(\F)\otimes_{\Ox_X}\Omg^1_X$$
be the canonical connection on $F^*_X{F_X}_*(\F)$.
Set
\begin{eqnarray*}
V_1&:=&\ker(F^*_X{F_X}_*(\F)\twoheadrightarrow\F),\\
V_{l+1}&:=&\ker\{V_l\stackrel{\nabla_{\mathrm{can}}}{\longrightarrow}F^*_X{F_X}_*(\F)
\otimes_{\Ox_X}\Omg^1_X\rightarrow (F^*_X{F_X}_*(\F)/V_l)
\otimes_{\Ox_X}\Omg^1_X\}
\end{eqnarray*}
The filtration
$${\mathbb{F}^{\mathrm{can}}_\F}_\bullet:F^*_X{F_X}_*(\F)=V_0\supset V_1\supset V_2\supset\cdots\supset V_{p-1}\supset V_p=0$$
is called the \emph{canonical filtration} of
$F^*_X{F_X}_*(\F)$.
\end{Definition}

\begin{Lemma}[Joshi-Ramanan-Xia-Yu \cite{JRXY06} and X. Sun \cite{Sun08i}]\label{Lem:Sun}
Let $k$ be an algebraically closed field of characteristic $p>0$, $X$ a smooth projective curve of genus $g$ over $k$, $\E$ a vector bundle on $X$. Then the canonical filtration of $F^*_X{F_X}_*(\E)$ is $$0=V_p\subset V_{p-1}\subset\cdots\subset V_{l+1}\subset V_l\subset\cdots\subset V_1\subset V_0=F^*_X{F_X}_*(\E)$$
such that
\begin{itemize}
\item[$(1)$] $\nabla_{\mathrm{can}}(V_{i+1})\subset V_i\otimes_{\Ox_X}\Omg^1_X$ for $0\leq i\leq p-1$.
\item[$(2)$] $V_l/V_{l+1}\stackrel{\nabla_{\mathrm{can}}}{\longrightarrow}\E\otimes_{\Ox_X}\Omg^{\otimes l}_X$ are isomorphic for $0\leq l\leq p-1$.
\item[$(3)$] If $g\geq 1$, then ${F_X}_*(\E)$ is semistable whenever $\E$ is semistable. If $g\geq 2$, then ${F_X}_*(\E)$ is stable whenever $\E$ is stable.
\item[$(4)$] If $g\geq 2$ and $\E$ is semistable, then the canonical filtration of $F^*_X{F_X}_*(\E)$ is nothing but the Harder-Narasimhan filtration of $F^*_X{F_X}_*(\E)$.
\end{itemize}
\end{Lemma}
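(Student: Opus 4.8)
The plan is to handle the four assertions separately: $(1)$ and $(2)$ by a local computation, $(3)$ by a slope estimate in which the canonical connection does the essential work, and $(4)$ as a formal consequence of $(1)$--$(2)$ together with uniqueness of the Harder--Narasimhan filtration. For $(1)$ and $(2)$: both assertions say that certain canonically defined morphisms of coherent sheaves are respectively strict inclusions and isomorphisms, so it suffices to check them on a formal neighbourhood of each closed point. Since the formation of the canonical filtration is functorial and additive in $\E$, and $\E$ is locally free, one reduces to $\E=\Ox_X$. Picking a local coordinate $t$, one identifies $F^*_X{F_X}_*(\Ox_X)$ with $\Ox_X[\tau]/(\tau^p)$ for $\tau$ the class of $t\otimes 1-1\otimes t$; there the counit onto $\Ox_X$ is $\tau\mapsto 0$ and the canonical connection is pinned down by $\nabla_{\mathrm{can}}(\tau)=1\otimes dt$ and the Leibniz rule, giving $\nabla_{\mathrm{can}}(\tau^l)=l\,\tau^{l-1}dt$. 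From this one reads off that $V_l=(\tau^l)$, that $\nabla_{\mathrm{can}}(V_l)\subset V_{l-1}\otimes_{\Ox_X}\Omg^1_X$, and that the $l$-fold iterate of $\nabla_{\mathrm{can}}$ composed with the projection to $\E$ sends $\tau^l$ to $l!\,(dt)^{\otimes l}$, which is an isomorphism onto $\E\otimes_{\Ox_X}\Omg^{\otimes l}_X$ since $l!$ is invertible in $k$ for $0\le l\le p-1$; as these morphisms are globally defined, re-gluing gives $(1)$ and $(2)$.

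For $(3)$, let $\G\subset{F_X}_*(\E)$ be a nonzero subsheaf; $\G$ is locally free ($X$ being a smooth curve), and flatness of $F_X$ gives an inclusion $F^*_X(\G)\hookrightarrow F^*_X{F_X}_*(\E)$. The key point is that $F^*_X(\G)$ is stable under $\nabla_{\mathrm{can}}$: this inclusion is $F^*_X$ applied to $\G\hookrightarrow{F_X}_*(\E)$ and the canonical connection is functorial in the sheaf; equivalently, by Cartier descent the $\nabla_{\mathrm{can}}$-stable subsheaves of $F^*_X{F_X}_*(\E)$ are precisely the $F^*_X$ of subsheaves of ${F_X}_*(\E)$, the latter being the horizontal sheaf of $(F^*_X{F_X}_*(\E),\nabla_{\mathrm{can}})$. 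Putting $\G_l:=F^*_X(\G)\cap V_l$, one gets $\nabla_{\mathrm{can}}(\G_l)\subset\G_{l-1}\otimes_{\Ox_X}\Omg^1_X$, and the defining property of the canonical filtration forces $\nabla_{\mathrm{can}}$ to induce an \emph{injective} map $\G_l/\G_{l+1}\hookrightarrow(\G_{l-1}/\G_l)\otimes_{\Ox_X}\Omg^1_X$; with $(2)$ this realizes $\G_l/\G_{l+1}$ as a subsheaf of $\E\otimes_{\Ox_X}\Omg^{\otimes l}_X$ of some rank $s_l$, with $s_0\ge s_1\ge\cdots\ge s_{p-1}\ge 0$ and $\sum_l s_l=\rk(\G)$. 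Since $\E$ is semistable and $g\ge 1$, each $\E\otimes_{\Ox_X}\Omg^{\otimes l}_X$ is semistable of slope $\mu(\E)+l(2g-2)$, so $\deg(\G_l/\G_{l+1})\le s_l(\mu(\E)+l(2g-2))$; summing, using $\deg F^*_X(\G)=p\deg(\G)$ and the Riemann--Roch identity $\mu({F_X}_*(\E))=\tfrac1p\mu(\E)+\tfrac{(p-1)(g-1)}{p}$, the bound $\mu(\G)\le\mu({F_X}_*(\E))$ reduces to the elementary inequality $\sum_l l\,s_l\le\tfrac{p-1}{2}\sum_l s_l$, which follows by pairing $l$ with $p-1-l$ and using monotonicity of $(s_l)_l$. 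In the stable case ($g\ge 2$, $\E$ stable, $\G\subsetneq{F_X}_*(\E)$) one refines this: equality would force each $\G_l/\G_{l+1}$ to be a subsheaf of the stable bundle $\E\otimes_{\Ox_X}\Omg^{\otimes l}_X$ of maximal slope, hence of rank $0$ or $r$, and then $s_l=s_{p-1-l}$ together with monotonicity makes all $s_l$ equal, whence $\G=0$ or $\G={F_X}_*(\E)$ --- a contradiction.

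For $(4)$, assume $g\ge 2$ and $\E$ semistable. By $(1)$ the successive quotients $V_l/V_{l+1}\cong\E\otimes_{\Ox_X}\Omg^{\otimes l}_X$ are semistable of slope $\mu(\E)+l(2g-2)$; reading the descending filtration $V_0\supset\cdots\supset V_p=0$ as an ascending one, the slopes of the graded pieces are \emph{strictly} decreasing because $2g-2>0$, so the canonical filtration satisfies the two defining properties of the Harder--Narasimhan filtration, and uniqueness of the latter finishes the proof.

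I expect the hard part to be $(3)$: making the canonical connection do genuine work --- extracting the monotonicity $s_0\ge\cdots\ge s_{p-1}$ from the injectivity of $\nabla_{\mathrm{can}}$ on graded pieces and then isolating the clean combinatorial inequality --- together with the technical care needed to justify that $F^*_X(\G)$, which is a priori only a subsheaf and not a subbundle, is genuinely preserved by $\nabla_{\mathrm{can}}$.
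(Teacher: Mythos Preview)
The paper does not supply its own proof of this lemma: it is stated as a quotation from \cite{JRXY06} and \cite{Sun08i} and is used as a black box (see e.g.\ Propositions~\ref{Prop:Injection}, \ref{Prop:Subsheaf}, \ref{DimofQuot}). So there is nothing in the paper to compare your argument against at the level of details.

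That said, your proposal is a faithful and essentially correct reconstruction of the original proofs in those references. Parts $(1)$ and $(2)$ are exactly the local computation carried out in \cite{JRXY06} and \cite[Lemma~3.2]{Sun08i}: the identification $F^*_X{F_X}_*(\Ox_X)\cong\Ox_X[\tau]/(\tau^p)$ with $\tau=t\otimes1-1\otimes t$, the formula $\nabla_{\mathrm{can}}(\tau^l)=l\,\tau^{l-1}dt$ (up to an irrelevant sign), and the conclusion $V_l=(\tau^l)$ are precisely the ingredients used there; the paper itself invokes this local model in the proof of Proposition~\ref{DimofQuot}. Part $(3)$ is Sun's argument \cite[Theorem~2.2]{Sun08i}: the $\nabla_{\mathrm{can}}$-stability of $F^*_X(\G)$, the induced injections $\G_l/\G_{l+1}\hookrightarrow(\G_{l-1}/\G_l)\otimes\Omg^1_X$, the monotone sequence $s_0\ge\cdots\ge s_{p-1}$, and the combinatorial inequality $\sum_l l\,s_l\le\tfrac{p-1}{2}\sum_l s_l$ are exactly his steps. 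Your treatment of the equality case is also correct; one small sharpening: once all $s_l$ are forced to be equal (to $0$ or to $r$), the case $s_l\equiv r$ gives $\rk(\G)=\rk({F_X}_*(\E))$, and then $\G\subsetneq{F_X}_*(\E)$ already forces $\mu(\G)<\mu({F_X}_*(\E))$, contradicting equality --- you do not literally need $\G={F_X}_*(\E)$. Part $(4)$ is immediate from $(2)$ and $2g-2>0$, as you say.

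In short: your proof is correct and matches the literature the paper cites; the paper itself offers no independent argument for you to compare with.
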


\begin{Proposition}\label{Prop:Injection}
Let $k$ be an algebraically closed field of characteristic $3$, $X$ a smooth projective curve of genus $2$ over $k$. Let $\E$ be a rank $3$ and degree $0$ stable vector bundle on $X$ and one has non-trivial homomorphism $F^*_X(\E)\rightarrow\Ls$, where $\Ls$ is a line bundle on $X$ with $\deg(\Ls)=-1$. Then the adjoint homomorphism $\E\hookrightarrow{F_X}_*(\Ls)$ is an injection.
\end{Proposition}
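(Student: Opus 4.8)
The plan is to pass to the adjoint map by Frobenius adjunction and then kill its kernel by a slope inequality, the point being that $\Ls$ has degree $-1$ precisely so that ${F_X}_*(\Ls)$ has small positive slope.

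First I would nail down the numerics for ${F_X}_*(\Ls)$. Since $F_X$ is finite of degree $p=3$, the sheaf ${F_X}_*(\Ls)$ has rank $3$; and since $F_X$ is affine, $\chi({F_X}_*(\Ls))=\chi(\Ls)$, so Riemann--Roch gives $\deg({F_X}_*(\Ls))=\deg(\Ls)+(p-1)(g-1)=-1+2=1$. Hence ${F_X}_*(\Ls)$ is a rank $3$, degree $1$ bundle, of slope $1/3$. Since $\Ls$ is a line bundle, hence stable, Lemma~\ref{Lem:Sun}(3) (with $g=2\geq 2$) tells us ${F_X}_*(\Ls)$ is stable.

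Next, by the adjunction $\mathrm{Hom}(F^*_X(\E),\Ls)\cong\mathrm{Hom}(\E,{F_X}_*(\Ls))$ the given non-trivial homomorphism $F^*_X(\E)\to\Ls$ corresponds to a non-zero homomorphism $\varphi\colon\E\to{F_X}_*(\Ls)$, and this is the adjoint map in the statement. Write $K=\ker\varphi$ and $\mathcal{I}=\mathrm{im}(\varphi)\cong\E/K$, and suppose, for a contradiction, that $K\neq 0$. Since $\E$ is a vector bundle and $\varphi\neq 0$, the subsheaf $K$ is torsion free of rank $1$ or $2$ (rank $3$ being excluded), so $\mathcal{I}$ has rank $2$ or $1$. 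On one hand $\mathcal{I}$ is a proper non-zero quotient of the stable bundle $\E$ of slope $0$, so stability of $\E$ forces $\mu(\mathcal{I})>0$; on the other hand $\mathcal{I}$ is a proper non-zero subsheaf of the stable bundle ${F_X}_*(\Ls)$ of slope $1/3$, so stability of ${F_X}_*(\Ls)$ forces $\mu(\mathcal{I})<1/3$. But $\mathcal{I}$ has rank $1$ or $2$, so $\mu(\mathcal{I})$ is a positive integer or positive half-integer, whence $\mu(\mathcal{I})\geq 1/2>1/3$ --- a contradiction. Therefore $K=0$ and $\varphi$ is injective.

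There is no serious obstacle here; the only points that require care are the degree formula for ${F_X}_*(\Ls)$ (using that $F_X$ is affine of degree $3$) and the double squeeze on $\mu(\mathcal{I})$ coming from the stability of $\E$ from below and of ${F_X}_*(\Ls)$ from above. It is exactly the arithmetic of $(p,g)=(3,2)$ together with $\deg(\Ls)=-1$ --- which forces $0<\mu(\mathcal{I})<1/3$ for any putative rank $1$ or rank $2$ image --- that makes the conclusion immediate; for other values of $(p,g)$ one would instead need a finer analysis of $\varphi$ itself.
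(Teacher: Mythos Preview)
Your proof is correct and follows essentially the same approach as the paper's: pass to the adjoint map via Frobenius adjunction, use Lemma~\ref{Lem:Sun}(3) to get stability of ${F_X}_*(\Ls)$ (rank $3$, degree $1$), and then squeeze the slope of the image between $\mu(\E)=0$ and $\mu({F_X}_*(\Ls))=1/3$ to rule out a proper image. The paper phrases the final step as a degree inequality ($\deg(\G)\geq 0$ from stability of $\E$, $\deg(\G)\leq 0$ from stability of ${F_X}_*(\Ls)$), while you phrase it via slopes and the observation that a positive slope in rank $1$ or $2$ forces $\mu\geq 1/2$; these are the same argument.
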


\begin{proof}
By adjunction, there is a non-trivial homomorphism $\E\rightarrow{F_X}_*(\Ls)$. Denote the image by $\G$. By stability of $\E$, we have $\deg(\G)\geq0$ and $\deg(\G)=0$ if and only if $\E\cong\G$. On the other hand, we have $\deg({F_X}_*(\Ls))=1$ (cf. \cite[Sect. 2.9]{JRXY06}). Moreover, by Lemma \ref{Lem:Sun}(3), the stability of ${F_X}_*(\Ls)$ implies that $\deg(\G)\leq0$. Hence $\deg(\G)=0$. Thus, $\E\cong\G$, i.e. the adjoint homomorphism $\E\hookrightarrow{F_X}_*(\Ls)$ is an injection.
\end{proof}

\begin{Proposition}\label{Prop:Subsheaf}
Let $k$ be an algebraically closed field of characteristic $3$, $X$ a smooth projective curve of genus $2$ over $k$. Let $\Ls$ be a line bundle on $X$ with $\deg(\Ls)=-1$, $\E$ a subsheaf of $F^*_X(\Ls)$ with $\rk(\E)=3$ and $\deg(\E)=0$. Then $\E$ is a semistable vector bundle. Moreover, there exists some rank $3$ and degree $0$ stable subsheaf of ${F_X}_*(\Ls)$.
\end{Proposition}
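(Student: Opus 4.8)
The plan is to derive both assertions from the stability of $V:={F_X}_*(\Ls)$, which holds by Lemma~\ref{Lem:Sun}(3) since $\Ls$ is a line bundle, hence stable, and $g=2\geq 2$; recall also that $\rk(V)=3$ and $\deg(V)=1$ (cf. \cite[Sect.~2.9]{JRXY06}), so $\mu(V)=1/3$. For the semistability of $\E$: let $\F\subseteq\E$ be a nonzero subsheaf realising $\mu_{\max}(\E)$. Since $V/\E\cong k(x)\neq 0$ we have $\F\subsetneq V$, so stability of $V$ gives $\mu(\F)<1/3$; as $\rk(\F)\leq 3$ and $\deg(\F)\in\Z$, this forces $\deg(\F)\leq 0$, hence $\mu_{\max}(\E)\leq 0=\mu(\E)$ and $\E$ is semistable. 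The same estimate shows that every subsheaf of $V$ of rank $1$ or $2$ has degree $\leq 0$, a fact used repeatedly below.

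For the existence of a stable subsheaf I first observe that a rank-$3$, degree-$0$ subsheaf of $V$ is exactly the kernel of a surjection $V\twoheadrightarrow k(x)$ onto the skyscraper sheaf at some point $x\in X$ (the extra datum at $x$ being the hyperplane $\ker(V\otimes k(x)\to k)$); hence such subsheaves are parametrised by a $\PP^2$-bundle $P$ over $X$, irreducible of dimension $3$. The goal is to bound the locus of non-stable members. So suppose $\E$ is semistable but not stable: there is $\F\subsetneq\E$ with $\mu(\F)=0$. We cannot have $\rk(\F)=1$, for the saturation of such an $\F$ in $V$ would be a degree-$0$ line bundle $M$ with $\mathrm{Hom}(M,V)=\mathrm{Hom}(F^*_X(M),\Ls)\neq 0$, impossible since $\deg F^*_X(M)=0>-1=\deg\Ls$. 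Thus $\rk(\F)=2$ and $\deg(\F)=0$; taking the saturation $\G$ of $\F$ in $V$ and invoking the degree bound, $\G$ is a rank-$2$, degree-$0$ \emph{sub-bundle} of $V$, and since $\G\cap\E$ has rank $2$ and degree $0$ (it contains $\F$ and lies in the semistable $\E$) it must coincide with $\G$, so $\G\subseteq\E$. Hence it suffices to choose $\E$ containing none of the rank-$2$, degree-$0$ sub-bundles of $V$.

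It remains to see these sub-bundles are finite in number. Any one of them is the kernel of a surjection $V\twoheadrightarrow Q$ onto a degree-$1$ line bundle $Q$ (any nonzero map $V\to Q$ is surjective, else its kernel would be a rank-$2$ subsheaf of $V$ of positive degree); by relative duality for the finite flat morphism $F_X$, with $\omega_{F_X}\cong(\Omg^1_X)^{\otimes(1-p)}$, we get $\mathrm{Hom}(V,Q)\cong H^0\big(X,\,\Ls^{-1}\otimes F^*_X(Q)\otimes(\Omg^1_X)^{\otimes(1-p)}\big)$, a line bundle of degree $0$, which is nonzero (and then $1$-dimensional) precisely when $F^*_X(Q)\cong\Ls\otimes(\Omg^1_X)^{\otimes(p-1)}$. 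Since $F^*_X$ acts as multiplication by $p$ on $\Pic_X$, the admissible $Q$ form a finite set (empty, or a torsor under $\Jac_X[p]$); let $\G_1,\dots,\G_N$ be the resulting sub-bundles. Each $\G_j$ determines a section $s_j\colon X\to P$, $x\mapsto(\G_j)_x$, and the subsheaf attached to a point of $P$ contains $\G_j$ iff that point lies on $s_j(X)$. Hence the non-stable locus of $P$ is contained in $\bigcup_{j=1}^N s_j(X)$, of dimension $\leq 1<3=\dim P$, so a general point of $P$ yields a rank-$3$, degree-$0$ stable subsheaf of ${F_X}_*(\Ls)$. I expect the delicate part to be the middle paragraph — excluding rank-$1$ destabilisers and promoting a rank-$2$ destabilising subsheaf of $\E$ to an honest sub-bundle of $V$ lying inside $\E$ — together with the relative-duality computation pinning down the finiteness of the $\G_j$; the semistability statement is immediate from the stability of ${F_X}_*(\Ls)$.
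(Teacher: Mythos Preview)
Your proof is correct. For the semistability of $\E$, your approach essentially coincides with the paper's: both deduce $\deg(\G)\leq 0$ for every proper subsheaf $\G\subset\E$ from the stability of ${F_X}_*(\Ls)$. The paper quotes the sharper bound $\mu(\G)-\mu({F_X}_*(\Ls))\leq -\frac{p-\rk(\G)}{p}(g-1)$ from \cite[Corollary~2.4]{Sun10ii}, but your cruder $\mu(\G)<1/3$ together with integrality of degrees already suffices.

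For the existence of a stable subsheaf, however, your route is quite different from the paper's. The paper gives a one-line construction: for any closed point $x\in X$, the pushforward ${F_X}_*(\Ls(-x))$ is a rank-$3$, degree-$0$ subsheaf of ${F_X}_*(\Ls)$, and it is stable by Lemma~\ref{Lem:Sun}(3) since $\Ls(-x)$ is a line bundle (hence stable) and $g\geq 2$. You instead parametrise all rank-$3$, degree-$0$ subsheaves by a $\PP^2$-bundle $P$ over $X$, rule out rank-$1$ destabilisers via adjunction, classify the rank-$2$ degree-$0$ sub-bundles of $V$ via relative duality for $F_X$ (reducing to a $p$-th-root equation in $\Pic(X)$, hence finitely many solutions), and conclude by a dimension count. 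This is considerably more work, but it yields more: you show that the non-stable locus in $P$ has dimension at most $1$, not merely that it is proper --- information in the spirit of what the paper develops only later when analysing the Quot scheme in Section~4. The paper's construction, by contrast, is immediate and produces an explicit stable subsheaf.
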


\begin{proof}
Let $\G\subset\E$ be a subsheaf of $\E$ with $\rk(\G)<\rk(\E)=3$. By \cite[Corollary 2.4]{Sun10ii} and the stability of ${F_X}_*(\Ls)$, we have
$$\mu(\G)-\mu({F_X}_*(\Ls))\leq-\frac{p-\rk(\G)}{p}(g-1)=-\frac{3-\rk(\G)}{3}.$$
It follows that
$$\mu(\G)\leq-\frac{2-\rk(\G)}{3}\leq 0.$$
Thus $\E$ is a semistable vector bundle.

Let $x$ be a closed point of $X$. Then ${F_X}_*(\Ls(-x))$ is a rank $3$ and degree $0$ stable subsheaf of ${F_X}_*(\Ls)$ by \cite[Sect. 2.9]{JRXY06} and Lemma \ref{Lem:Sun}(3).
\end{proof}

By Proposition \ref{Prop:Injection}, Proposition \ref{Prop:Subsheaf} and the classification of Harder-Narasimhan polygons of Frobenius pull backs of Frobenius destabilized semistable vector bundles in the case $(p,g,r,d)=(3,2,3,0)$, we have any Frobenius destabilized stable bundle $\E\in\M^s_{X}(3,0)$ with $\HNP(F^*_X(\E))\neq\Ps_1$ can be embedded into $F^*_X(\Ls)$ for some line bundle $\Ls$ on $X$ of degree $-1$.

\section{Frobenius Stratification of Quot Schemes}

Let $k$ be an algebraically closed field of characteristic $p>0$, $X$ a smooth projective curve of genus $g$ over $k$, $F_X:X\rightarrow X$ the absolute Frobenius morphism. Let $P^t(T)\in\Q[T]$ be the Hilbert polynomial of ${F_X}_*(\Ls)$ for any line bundle $\Ls$ of degree $t$ on $X$, and $P_{r,d}(T)\in\Q[T]$ the Hilbert polynomial of any vector bundle $\F$ of rank $r$ and degree $d$ on $X$. Denote $$\Phi^t_{r,d}(T):=P^t(T)-P_{r,d}(T)\in\Q[T].$$

Let $\Pic^{(t)}(X)$ be the Picard scheme parameterizing all line bundles of degree $t$ on $X$, $\Lc$ the universal line bundle on $\Pic^{(t)}(X)\times_kX$, and
$$\pi:\Quot_X(r,d,\Pic^{(t)}(X)):=\Quot^{\Phi^t_{r,d}}_{(\Id_{\Pic^{(t)}(X)}\times F_X)_*(\Lc)/\Pic^{(t)}(X)\times_kX/\Pic^{(t)}(X)}\rightarrow\Pic^{(t)}(X)$$ the Quot scheme classifying the subsheaves of $(\Id_{\Pic^{(t)}(X)}\times F_X)_*(\Lc)$ on $\Pic^{(t)}(X)\times_kX$, which are flat families of vector bundles of rank $r$ and degree $d$ on $X$ parameterized by $\Pic^{(t)}(X)$. Then for any $\Ls\in\Pic^{(t)}(X)$, the fiber of $\pi$ over $[\Ls]$ is the Quot scheme $\Quot_X(r,d,\Ls):=\Quot^{\Phi^t_{r,d}}_{{F_X}_*(\Ls)/X/k}$ which parameterizing all rank $r$ and degree $d$ subsheaves of ${F_X}_*(\Ls)$ on $X$.

Consider the commutative diagram of morphisms
$$\xymatrix{
\Quot_X(r,d,\Pic^{(t)}(X))\times_kX \ar[r]^-{\Id_{\Quot}\times F_X}\ar[d]^{\pi\times\Id_X} & \Quot_X(r,d,\Pic^{(t)}(X))\times_kX \ar[d]^{\pi\times\Id_X} \\
\Pic^{(t)}(X)\times_kX \ar[r]^-{\Id_{\Pic^{(t)}(X)}\times F_X} & \Pic^{(t)}(X)\times_kX.}$$
and the universal subsheaf $$\Ec\hookrightarrow(\pi\times\Id_X)^*(\Id_{\Quot}\times F_X)_*(\Lc)=(\Id_{\Quot}\times F_X)_*(\pi\times\Id_X)^*(\Lc)$$
on $\Quot_X(r,d,\Pic^{(t)}(X))\times_kX$. By adjunction, we have homomorphism
$${(\Id_{\Quot}\times F_X)}^*(\Ec)\rightarrow(\pi\times\Id_X)^*(\Lc)$$
on $\Quot_X(r,d,\Pic^{(t)}(X))\times_kX$.
Let $\G$ be the co-kernel of above homomorphism. Then $\mathrm{pr}_*(\G)$ is a coherent sheaf on $\Quot_X(r,d,\Pic^{(t)}(X))$, where $$\mathrm{pr}:\Quot_X(r,d,\Pic^{(t)}(X))\times_kX\rightarrow\Quot_X(r,d,\Pic^{(t)}(X))$$ is the natural projection. Then the subset
$$\Quot^\sharp_X(r,d,\Pic^{(t)}(X)):=\{q\in\Quot_X(r,d,\Pic^{(t)}(X))|\dim_{\kappa(q)}(\mathrm{pr}_*(\G)_q\otimes_{\Ox_q}\kappa(q))=0\}$$
is an open subscheme of $\Quot_X(r,d,\Pic^{(t)}(X))$. Then we have
\begin{eqnarray*}
\Quot_X(r,d,\Pic^{(t)}(X))(k)&=&\{~[\E\hookrightarrow{F_X}_*(\Ls)]~|~\rk(\E)=r,\deg(\E)=d,\Ls\in\Pic^{(t)}(X)~\}\\
\Quot^\sharp_X(r,d,\Pic^{(t)}(X))(k)&=&\left\{~[\E\hookrightarrow{F_X}_*(\Ls)]\in\Quot_X(r,d,\Pic^{(t)}(X))(k)~\Bigg|
\begin{array}{ll}
\text{The adjoint homomorphism}\\
F^*_X(\E)\rightarrow\Ls~\text{is surjective}.
\end{array}
\right\}
\end{eqnarray*}

Let $\Ps\in\ConPgn(r,pd)$, we denote the subschemes of $\Quot_X(r,d,\Pic^{(t)}(X))$ and $\Quot^\sharp_X(r,d,\Pic^{(t)}(X))$ as the following (For simplicity, we describe these Quot schemes in the sense of closed points):
\begin{eqnarray*}
\Quot_X(r,d,\Pic^{(t)}(X),\Ps)(k)&:=&\{~[\E\hookrightarrow{F_X}_*(\Ls)]\in\Quot_X(r,d,\Pic^{(t)}(X))(k)~|~\HNP(F^*_X(\E))=\Ps~\}\\
\Quot_X(r,d,\Pic^{(t)}(X),\Ps^+)(k)&:=&\{~[\E\hookrightarrow{F_X}_*(\Ls)]\in\Quot_X(r,d,\Pic^{(t)}(X))(k)~|~\HNP(F^*_X(\E))\succcurlyeq\Ps~\}\\
\Quot^\sharp_X(r,d,\Pic^{(t)}(X),\Ps)(k)&:=&\{~[\E\hookrightarrow{F_X}_*(\Ls)]\in\Quot^\sharp_X(r,d,\Pic^{(t)}(X))(k)~|~\HNP(F^*_X(\E))=\Ps~\}\\
\Quot^\sharp_X(r,d,\Pic^{(t)}(X),\Ps^+)(k)&:=&\{~[\E\hookrightarrow{F_X}_*(\Ls)]\in\Quot^\sharp_X(r,d,\Pic^{(t)}(X))(k)~|~\HNP(F^*_X(\E))\succcurlyeq\Ps~\}.
\end{eqnarray*}
Let $\Ls\in\Pic^{(t)}(X)$. The fibers of the projections
\begin{eqnarray*}
\Quot_X(r,d,\Pic^{(t)}(X))&\stackrel{\pi}{\rightarrow}&\Pic^{(t)}(X)\\
\Quot_X(r,d,\Pic^{(t)}(X),\Ps)\hookrightarrow\Quot_X(r,d,\Pic^{(t)}(X))&\stackrel{\pi}{\rightarrow}&\Pic^{(t)}(X)\\
\Quot_X(r,d,\Pic^{(t)}(X),\Ps^+)\hookrightarrow\Quot_X(r,d,\Pic^{(t)}(X))&\stackrel{\pi}{\rightarrow}&\Pic^{(t)}(X).
\end{eqnarray*}
\begin{eqnarray*}
\Quot^\sharp_X(r,d,\Pic^{(t)}(X))\hookrightarrow\Quot_X(r,d,\Pic^{(t)}(X))&\stackrel{\pi}{\rightarrow}&\Pic^{(t)}(X)\\
\Quot^\sharp_X(r,d,\Pic^{(t)}(X),\Ps)\hookrightarrow\Quot^\sharp_X(r,d,\Pic^{(t)}(X))\hookrightarrow\Quot_X(r,d,\Pic^{(t)}(X))&\stackrel{\pi}{\rightarrow}&\Pic^{(t)}(X)\\
\Quot^\sharp_X(r,d,\Pic^{(t)}(X),\Ps^+)\hookrightarrow\Quot^\sharp_X(r,d,\Pic^{(t)}(X))\hookrightarrow\Quot_X(r,d,\Pic^{(t)}(X))&\stackrel{\pi}{\rightarrow}&\Pic^{(t)}(X).
\end{eqnarray*}
over the $[\Ls]$ are denoted by $\Quot_X(r,d,\Ls)$, $\Quot_X(r,d,\Ls,\Ps)$, $\Quot_X(r,d,\Ls,\Ps^+)$, $\Quot^\sharp_X(r,d,\Ls)$, $\Quot^\sharp_X(r,d,\Ls,\Ps)$ and $\Quot^\sharp_X(r,d,\Ls,\Ps^+)$ respectively.
For example, the scheme $\Quot^\sharp_X(r,d,\Ls,\Ps)$ parameterizing all rank $r$ and degree $d$ subsheaves $\E\subset{F_X}_*(\Ls)$ with surjective adjoint homomorphism $F^*_X(\E)\rightarrow\Ls$ such that $\HNP(F^*_X(\E))=\Ps$.

In this section, we are interested in the Frobenius stratification of moduli spaces of vector bundles in the case $$(p,g,r,d,t)=(3,2,3,0,-1).$$
In this case, the scheme $\Quot_X(3,0,\Pic^{(-1)}(X))$ parameterizing all the rank $3$ and degree $0$ subsheaves of ${F_X}_*(\Ls)$ for any line bundle $\Ls$ of degree $-1$ on $X$. By Proposition \ref{Prop:Injection} and Proposition \ref{Prop:Subsheaf}, we know that these vector bundles are semistable. This induces a natural morphism
\begin{eqnarray*}
\theta:\Quot_X(3,0,\Pic^{(-1)}(X))&\rightarrow&\M^{ss}_X(3,0)\\
{[\E\hookrightarrow{F_X}_*(\Ls)]}&\mapsto&[\E].
\end{eqnarray*}
Now, we first analysis the structure of the $\Quot_X(3,0,\Pic^{(-1)}(X))$. Let $$e:=[\E\hookrightarrow{F_X}_*(\Ls)]$$
be a closed point of $\Quot_X(3,0,\Pic^{(-1)}(X))$, where $\Ls\in\Pic^{(-1)}(X)$. The non-trivial adjoint homomorphism $F^*_X(\E)\rightarrow\Ls$ implies that $$\mu(F^*_X(\E))>\mu(\Ls)\geq\mu_{\mathrm{min}}(F^*_X(\E)),$$ so $\E$ is a Frobenius destabilized semistable vector bundle.

\begin{Proposition}\label{Prop:Intersection}
Let $k$ be an algebraically closed field of characteristic $3$, $X$ a smooth projective curve of genus $2$ over $k$. Let $\Ls$ be a line bundle of degree $-1$ on $X$, $0\subset E_2\subset E_1\subset F^*_X({F_X}_*(\Ls))$ the canonical filtration of $F^*_X({F_X}_*(\Ls))$. Let $[\E\hookrightarrow{F_X}_*(\Ls)]\in\Quot_X(3,0,\Ls)(k)$. Then $\HNP(F^*_X(\E))\in\{\Ps_2,\Ps_3,\Ps_4\}$, and
\begin{itemize}
\item[$(1)$] $\HNP(F^*_X(\E))=\Ps_4$ if and only if $\deg(F^*_X(\E)\cap E_2)=2$ if and only if the adjoint homomorphism $F^*_X(\E)\rightarrow\Ls$ is not surjective. In this case, the Harder-Narasimhan filtration of $F^*_X(\E)$ is $$0\subset F^*_X(\E)\cap E_2\subset F^*_X(\E)\cap E_1\subset F^*_X(\E).$$
\item[$(2)$] $\HNP(F^*_X(\E))=\Ps_3$ if and only if $\deg(F^*_X(\E)\cap E_2)=1$. In this case, $[\E\hookrightarrow{F_X}_*(\Ls)]\in\Quot^\sharp_X(3,0,\Ls)(k)$ and the Harder-Narasimhan filtration of $F^*_X(\E)$ is $$0\subset F^*_X(\E)\cap E_2\subset F^*_X(\E)\cap E_1\subset F^*_X(\E).$$
\item[$(3)$] $\HNP(F^*_X(\E))=\Ps_2$ if and only if $\deg(F^*_X(\E)\cap E_2)=0$. In this case, $[\E\hookrightarrow{F_X}_*(\Ls)]\in\Quot^\sharp_X(3,0,\Ls)(k)$ and the Harder-Narasimhan filtration of $F^*_X(\E)$ is $$0\subset F^*_X(\E)\cap E_1\subset F^*_X(\E).$$
\end{itemize}
\end{Proposition}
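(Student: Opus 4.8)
The plan is to analyze the intersection of $F^*_X(\E)$ with the two-step canonical filtration $0\subset E_2\subset E_1\subset F^*_X({F_X}_*(\Ls))$, using Lemma \ref{Lem:Sun} to compute the ranks and degrees of the quotients $E_1/E_2$ and $F^*_X({F_X}_*(\Ls))/E_1$. Since $p=3$, the canonical filtration has three graded pieces, each of rank $1$: by Lemma \ref{Lem:Sun}(2) we have $V_l/V_{l+1}\cong\Ls\otimes\Omg^{\otimes l}_X$, so with $\deg(\Ls)=-1$ and $\deg(\Omg^1_X)=2g-2=2$, the graded pieces of $F^*_X({F_X}_*(\Ls))=V_0\supset V_1=E_1\supset V_2=E_2\supset 0$ have degrees $-1$, $1$, $3$ reading from $V_0/V_1$ down to $V_2$. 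Thus $\deg(E_2)=3$, $\deg(E_1)=4$, $\deg(F^*_X({F_X}_*(\Ls)))=3$, and each $E_i$ has rank $i$. First I would record these numerics and also recall from Lemma \ref{Lem:Sun}(4) that this canonical filtration is the Harder-Narasimhan filtration of $F^*_X({F_X}_*(\Ls))$, so its slopes $3>2>-1$ are strictly decreasing.

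Next, for $[\E\hookrightarrow{F_X}_*(\Ls)]$ with $\rk(\E)=3$, $\deg(\E)=0$, the pullback $\mathscr{F}:=F^*_X(\E)$ is a rank $3$, degree $0$ subsheaf of $F^*_X({F_X}_*(\Ls))$, which is itself rank $3$; hence $\mathscr{F}$ is a full-rank subsheaf and the quotient $F^*_X({F_X}_*(\Ls))/\mathscr{F}$ is a torsion sheaf of length $3$. Set $a_i:=\deg(\mathscr{F}\cap E_i)$ for $i=1,2$ (with $\mathscr{F}\cap E_i$ a subsheaf of $E_i$ of the same rank $i$, so $a_i\le\deg(E_i)$, i.e. $a_2\le 3$ and $a_1\le 4$). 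The key inequalities come from the fact that $\mathscr{F}\cap E_2\subset\mathscr{F}\cap E_1\subset\mathscr{F}$ is a filtration of $\mathscr{F}$ by saturated-in-rank subsheaves whose successive quotients are the intersections of $\mathscr{F}$ with the HN-graded pieces of $F^*_X({F_X}_*(\Ls))$, and stability/semistability of $\E$ (equivalently the slope bounds that were already used in Proposition \ref{Prop:Subsheaf}) forces each graded piece of $\mathscr{F}$ to have controlled slope. Concretely, $\mathscr{F}\cap E_2$ has rank $1$ and degree $a_2$; since $\nabla_{\mathrm{can}}$ induces an injection of $E_2$ into $(F^*_X({F_X}_*(\Ls))/E_2)\otimes\Omg^1_X$ mapping the first graded piece isomorphically, and since $\mathscr{F}$ is $\nabla_{\mathrm{can}}$-stable only when it comes from descent — which it does not, as $\E$ is Frobenius-destabilized — the genericity of the horizontal map restricts $a_2$. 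I would combine: (i) $a_2\ge\mathrm{something}$ from semistability of $\E$ applied to the rank-$1$ subbundle generated by $\mathscr{F}\cap E_2$ inside $\E$ (its $\deg$ is $\frac{1}{p}a_2=\frac{a_2}{3}$ and must be $\le 0$ by stability, giving $a_2\le 0$... ), and here I need to be careful about the direction — rather, $\mathscr{F}\cap E_2$ pushes down to a subsheaf of $\E$ whose degree is at most $0$, which after dividing by $p=3$ pins $a_2\in\{0,1,2\}$ since also $a_2\ge 0$ because $\Omg$ is effective and the graded piece of $\mathscr{F}$ mapping to it is a subsheaf of $E_2$. The three values $a_2=2,1,0$ then give exactly the three polygons.

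For each value of $a_2$ I would then reconstruct the HN filtration of $\mathscr{F}$ explicitly. If $a_2=2$: $\mathscr{F}\cap E_2$ has slope $2$, one checks $\mathscr{F}\cap E_1$ has rank $2$ degree $2$ and $\mathscr{F}$ has degree $0$, so the three-step filtration has slopes $2,0,-2$, which is exactly $\Ps_4$; moreover the composite $\mathscr{F}\to F^*_X({F_X}_*(\Ls))\to\Ls$ (the adjoint map, landing in the bottom graded piece $V_0/V_1$ of degree $-1$) must vanish because $\mathscr{F}\subset E_1=V_1$ — equivalently $\deg(\mathscr{F}\cap E_2)=2$ exactly when $\mathscr{F}\subseteq E_1$, i.e. adjoint not surjective. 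If $a_2=1$: $\mathscr{F}\cap E_1$ works out to rank $2$ degree $1$ and the filtration has slopes $1, \tfrac12-? $ — I would instead argue the HN filtration is $0\subset\mathscr{F}\cap E_2\subset\mathscr{F}\cap E_1\subset\mathscr{F}$ with successive slopes strictly decreasing matching $\Ps_3$, and surjectivity of the adjoint map follows since $\mathscr{F}\not\subseteq E_1$. If $a_2=0$: the sub $\mathscr{F}\cap E_2$ has slope $0$ equal to $\mu(\mathscr{F})$, so it is not destabilizing and drops out of the HN filtration, leaving $0\subset\mathscr{F}\cap E_1\subset\mathscr{F}$ with slopes $\tfrac{?}{2}>?$ matching the two-step polygon $\Ps_2$; surjectivity of the adjoint again holds.

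The main obstacle I anticipate is pinning down the degree of $\mathscr{F}\cap E_1$ (equivalently, showing $a_1$ is determined by $a_2$ rather than free): one must rule out, say, $a_2=0$ with $a_1$ large, which would produce a subsheaf of $\E$ of positive degree contradicting stability — so the real content is a careful bookkeeping of $\frac{1}{p}$-degrees of the push-forwards of $\mathscr{F}\cap E_1$ and $\mathscr{F}\cap E_2$ to $X$ against the (semi)stability bound, exactly the inequality $\mu(\G)\le -\tfrac{2-\rk(\G)}{3}$ from Proposition \ref{Prop:Subsheaf}, used now as an equality-forcing constraint. I would also need to verify that in each case the filtration exhibited really is the HN filtration (slopes strictly decreasing and each graded piece semistable), which follows because each graded piece is a line bundle (rank $\le 2$ pieces that are not line bundles split further, but a rank-$2$ piece here is automatically semistable once its slope is squeezed between the neighboring slopes by Lemma \ref{Lemma:InsHN}(1) and the global bound of Lemma \ref{Lemma:InsHN}(2)). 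Finally, the "only if" directions are immediate by reading $\deg(\mathscr{F}\cap E_2)$ off the polygon, and the equivalence with (non)surjectivity of the adjoint is the statement that $\mathscr{F}\subseteq E_1$ iff the composite to $\Ls=V_0/V_1$ is zero.
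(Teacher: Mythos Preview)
Your overall strategy --- intersecting $\mathscr{F}:=F^*_X(\E)$ with the canonical filtration and casing on $a_2=\deg(\mathscr{F}\cap E_2)$ --- matches the paper's, but the mechanism you propose for bounding $a_2$ and $a_1$ does not work, and one ingredient is missing. First, the claim ``$\deg(\mathscr{F}\cap E_2)=2$ exactly when $\mathscr{F}\subseteq E_1$'' is impossible: $\mathscr{F}$ has rank $3$ while $E_1$ has rank $2$. Non-surjectivity of the adjoint only says that the line bundle $\mathscr{F}/(\mathscr{F}\cap E_1)\hookrightarrow\Ls$ has degree $\le -2$. Second, there is no way to ``push $\mathscr{F}\cap E_2$ down to a subsheaf of $\E$'': a subsheaf of $F^*_X(\E)$ does not produce a subsheaf of $\E$, so the stability bound on $\E$ cannot be applied as you suggest. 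You also have the $\nabla_{\mathrm{can}}$-stability backwards: $\mathscr{F}=F^*_X(\E)$ \emph{is} a Frobenius pullback and hence is $\nabla_{\mathrm{can}}$-invariant. This is precisely the tool the paper uses: because $\mathscr{F}$ is $\nabla_{\mathrm{can}}$-stable, the connection induces injections
\[
\mathscr{F}\cap E_2\;\hookrightarrow\;\bigl((\mathscr{F}\cap E_1)/(\mathscr{F}\cap E_2)\bigr)\otimes\Omega^1_X,\qquad
(\mathscr{F}\cap E_1)/(\mathscr{F}\cap E_2)\;\hookrightarrow\;\bigl(\mathscr{F}/(\mathscr{F}\cap E_1)\bigr)\otimes\Omega^1_X,
\]
and these degree inequalities (together with the obvious bounds from $E_1/E_2$ and $\Ls$) are what force $a_2\in\{0,1,2\}$ and pin down $a_1$.

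Finally, in the $\Ps_3$ ``only if'' direction you are missing a genuine step. When the maximal destabilizing sub-line-bundle $E'\subset\mathscr{F}$ has degree $1$, one must exclude the possibility that $E'\not\subset E_2$; in that case $E'$ would map isomorphically onto $E_1/E_2$ and split $E_1$ as $E'\oplus E_2$. The paper rules this out via the Grothendieck--Raynaud computation (Corollary~\ref{Cor:NonSplitting}): since $p\nmid(g-1)$ here, $E_1$ is a nontrivial extension. Your outline does not invoke this, and without it the equivalence in part~(2) is not established.
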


\begin{proof}
The canonical filtration $0\subset E_2\subset E_1\subset F^*_X({F_X}_*(\Ls))$ induces the filtration $$0\subset F^*_X(\E)\cap E_2\subset F^*_X(\E)\cap E_1\subset F^*_X(\E).$$
Then the injections $F^*_X(\E)\cap E_2\hookrightarrow E_2$ and $F^*_X(\E)/(F^*_X(\E)\cap E_1)\hookrightarrow F^*_X({F_X}_*(\Ls))/E_1$ imply that $\deg(F^*_X(\E)\cap E_2)\leq 3$ and $\deg(F^*_X(\E)/(F^*_X(\E)\cap E_1))\leq -1$. Suppose that $[\E\hookrightarrow{F_X}_*(\Ls)]\in\Quot_X(3,0,\Ls)(k)$ such that $F^*_X(\E)$ is semistable or $\HNP(F^*_X(\E))=\Ps_1$, then $\mu_{\mathrm{min}}(F^*_X(\E))\geq-\frac{1}{2}$. This contradicts the fact $\deg(F^*_X(\E)/(F^*_X(\E)\cap E_1))\leq -1$. Hence $\HNP(F^*_X(\E))\in\{\Ps_2,\Ps_3,\Ps_4\}$.

We first claim that $$0\leq\deg(F^*_X(\E)\cap E_2)\leq 2.$$ The fact $\deg(F^*_X(\E)\cap E_2)\leq 2$ is followed by \cite[Theorem 2]{Shatz77} and the classification of Harder-Narasimhan polygons of Frobenius pull backs of Frobenius destabilized semistable vector bundles in the case $(p,g,r,d)=(3,2,3,0)$. On the other hand, suppose that $\deg(F^*_X(\E)\cap E_2)<0$, then $$\deg((F^*_X(\E)\cap E_1)/(F^*_X(\E)\cap E_2))\geq 2.$$ Then the injection
$$(F^*_X(\E)\cap E_1)/(F^*_X(\E)\cap E_2)\hookrightarrow(F^*_X(\E)/(F^*_X(\E)\cap E_1))\otimes_{\Ox_X}\Omg^1_X$$ induces a contradiction, since $\deg((F^*_X(\E)/(F^*_X(\E)\cap E_1))\otimes_{\Ox_X}\Omg^1_X)\leq 1$. This completes the proof of the claim.

(1).
I. If $\HNP(F^*_X(\E))=\Ps_4$, there exists a unique maximal destabilizing sub-line bundle $E'\subset F^*_X(\E)$ with $\deg(E')=2$. Suppose that $E'\nsubseteq F^*_X(\E)\cap E_1$, then the composition $$E'\hookrightarrow F^*_X(\E)\hookrightarrow F^*_X({F_X}_*(\Ls))\twoheadrightarrow F^*_X({F_X}_*(\Ls))/E_1\cong\Ls$$ is non-trivial. This induces a contradiction since $\deg(E')>\deg(\Ls)$. Suppose that $E'\subset F^*_X(\E)\cap E_1$ and $E'\nsubseteq F^*_X(\E)\cap E_2$, then the composition $$E'\hookrightarrow F^*_X(\E)\cap E_1\hookrightarrow E_1\twoheadrightarrow E_1/E_2$$ is non-trivial. This induces a contradiction since $\deg(E')>\deg(E_1/E_2)$. Hence $E'\subset F^*_X(\E)\cap E_2$. In fact $E'=F^*_X(\E)\cap E_2$. Thus $\deg(F^*_X(\E)\cap E_2)=2$.

II. If $\deg(F^*_X(\E)\cap E_2)=2$, then the injection $$F^*_X(\E)\cap E_2\hookrightarrow((F^*_X(\E)\cap E_1)/(F^*_X(\E)\cap E_2))\otimes_{\Ox_X}\Omg^1_X$$ implies that $\deg((F^*_X(\E)\cap E_1)/(F^*_X(\E)\cap E_2))\geq 0$. So $\deg(F^*_X(\E)/(F^*_X(\E)\cap E_1))\leq -2$. Hence the adjoint homomorphism $F^*_X(\E)\rightarrow\Ls$ is not surjective.

III. If $F^*_X(\E)\rightarrow\Ls$ is not surjective, then $\mu_{\mathrm{min}}(F^*_X(\E))\leq -2$. Then by \cite[Theorem 2]{Shatz77} and the classification of Harder-Narasimhan polygons of Frobenius pull backs of Frobenius destabilized semistable vector bundles in the case $(p,g,r,d)=(3,2,3,0)$, we have $\HNP(F^*_X(\E))=\Ps_4$.

In this case, it is easy to see that the Harder-Narasimhan filtration $F^*_X(\E)$ is $$0\subset F^*_X(\E)\cap E_2\subset F^*_X(\E)\cap E_1\subset F^*_X(\E).$$

(2). I. If $\deg(F^*_X(\E)\cap E_2)=1$, then the adjoint homomorphism $F^*_X(\E)\rightarrow\Ls$ is surjective by (1) and $$\mu(F^*_X(\E)\cap E_2)>\mu((F^*_X(\E)\cap E_1)/(F^*_X(\E)\cap E_2))>\mu(F^*_X(\E)/(F^*_X(\E)\cap E_1)).$$ Hence,
$\HNP(F^*_X(\E))=\Ps_3$ and the Harder-Narasimhan filtration of $F^*_X(\E)$ is $$0\subset F^*_X(\E)\cap E_2\subset F^*_X(\E)\cap E_1\subset F^*_X(\E).$$

II. If $\HNP(F^*_X(\E))=\Ps_3$, there exists a unique maximal destabilizing sub-line bundle $E'\subset F^*_X(\E)\cap E_1$ with $\deg(E')=1$. Suppose that $E'\nsubseteq F^*_X(\E)\cap E_2$, then $$E'\hookrightarrow F^*_X(\E)\cap E_1\hookrightarrow E_1\twoheadrightarrow E_1/E_2$$ is non-trivial. This implies $E'\cong E_1/E_2$ since $E'$ and $E_1/E_2$ are line bundles with same degree. Then $E_1=E'\oplus E''$ for some line bundle $E''$ of degree $3$. This induces a contradiction by Corollary \ref{DimofQuot}. Hence $E'\subseteq F^*_X(\E)\cap E_2$. In fact $E'=F^*_X(\E)\cap E_2$, so $\deg(F^*_X(\E)\cap E_2)=1$.

(3). By the proof of (1) and (2), we can conclude that $\HNP(F^*_X(\E))=\Ps_2$ if and only if $\deg(F^*_X(\E)\cap E_2)=0$. In this case, $[\E\hookrightarrow{F_X}_*(\Ls)]\in\Quot^\sharp_X(3,0,\Ls)(k)$ and the Harder-Narasimhan filtration $F^*_X(\E)$ is $$0\subset F^*_X(\E)\cap E_1\subset F^*_X(\E).$$
\end{proof}

\begin{Lemma}[A. Grothendieck, M. Raynaud]\label{Lem:Grothendieck}
Let $k$ be an algebraically closed field of characteristic $p>2$, $X$ a smooth projective curve of genus $g\geq 2$ over $k$, $F:X\rightarrow X_1:=X\times_kk$ the relative Frobenius morphism over $k$. Let $B^1_X$ be the locally free sheaf of locally exact differential forms on $X_1$ defined by the exact sequence of locally free sheaves
$$0\rightarrow\Ox_{X_1}\rightarrow{F}_*(\Ox_X)\rightarrow B^1_X\rightarrow 0.$$
Then the Harder-Narasimhan filtration of $F^*(B^1_X)$ is $$0=V_p\subset V_{p-1}\subset\cdots\subset V_{l+1}\subset V_l\subset\cdots\subset V_1=F^*(B^1_X)$$
such that $V_i/V_{i+1}\cong\mathrm{\Omega}^{\otimes i}_X$ for any $1\leq i\leq p-1$, and $p|(g-1)$ if and only if $$F^*(B^1_X)\cong\mathrm{\Omega}^{\otimes p-1}_X\oplus\mathrm{\Omega}^{\otimes p-2}_X\oplus\cdots\oplus\mathrm{\Omega}^1_X.$$
In particular, in the case $p=3$ and $g=2$, we have $$F^*(B^1_X)\ncong\mathrm{\Omega}^{\otimes 2}_X\oplus\mathrm{\Omega}^1_X.$$
\end{Lemma}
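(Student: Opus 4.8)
The plan is to obtain everything from the canonical filtration of $F^*F_*\Ox_X$ furnished by Lemma \ref{Lem:Sun}. First I would pull the defining exact sequence back along the (flat) morphism $F$, obtaining $0\to F^*\Ox_{X_1}\to F^*F_*\Ox_X\to F^*B^1_X\to 0$. Since $F^*\Ox_{X_1}=\Ox_X$ and the counit $F^*F_*\Ox_X\to\Ox_X$ of the adjunction $F^*\dashv F_*$ retracts the inclusion $\Ox_X=F^*\Ox_{X_1}\hookrightarrow F^*F_*\Ox_X$ (triangle identity), this sequence is split, so $F^*F_*\Ox_X\cong\Ox_X\oplus F^*B^1_X$ and $F^*B^1_X$ is identified with the first term $V_1$ of the canonical filtration of $F^*F_*\Ox_X$ in the sense of Definition \ref{CanFil} (taken with $\Ox_X$ in the role of the coherent sheaf; this uses the relative Frobenius in place of $F_X$, which is harmless since $k$ is perfect). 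By Lemma \ref{Lem:Sun}(2) the graded pieces are $V_i/V_{i+1}\cong\Ox_X\otimes\Omg^{\otimes i}_X=\Omg^{\otimes i}_X$, and by Lemma \ref{Lem:Sun}(4) the canonical filtration is the Harder-Narasimhan filtration; restricting it to the subobject $V_1=F^*B^1_X$ gives exactly the asserted filtration with $V_i/V_{i+1}\cong\Omg^{\otimes i}_X$ for $1\le i\le p-1$. It is the Harder-Narasimhan filtration of $F^*B^1_X$ because its graded pieces are line bundles of strictly decreasing slopes $(p-1)(2g-2)>(p-2)(2g-2)>\cdots>(2g-2)>0$ (here $g\ge 2$ is used).

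For the equivalence I would first reduce it, via the Krull--Schmidt theorem for vector bundles on $X$ together with the decomposition $F^*F_*\Ox_X\cong\Ox_X\oplus F^*B^1_X$ just obtained, to the statement that $F^*F_*\Ox_X\cong\bigoplus_{i=0}^{p-1}\Omg^{\otimes i}_X$ if and only if $p\mid(g-1)$ --- the two bundles differ only by the trivial summand peeled off above, and the $\Omg^{\otimes i}_X$ with $i\ge 1$ have positive degree and so cannot produce an extra trivial summand. In this form the equivalence is the classical theorem of Grothendieck and Raynaud. The implication actually needed, namely that a decomposition forces $p\mid(g-1)$, I would prove by hand as follows. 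A decomposition of $F^*B^1_X$ in particular splits off the quotient line bundle $V_1/V_2\cong\Omg^1_X$; since $\mathrm{Hom}(F^*B^1_X,\Omg^1_X)$ is one-dimensional --- spanned by the canonical surjection $V_1\twoheadrightarrow V_1/V_2$, because $\mathrm{Hom}(V_2,\Omg^1_X)=0$ on slope grounds --- it follows that the extension $0\to V_2\to V_1\to\Omg^1_X\to 0$ splits. Pushing it out along $V_2\twoheadrightarrow V_2/V_3\cong\Omg^{\otimes 2}_X$ yields a split extension
$$0\longrightarrow\Omg^{\otimes 2}_X\longrightarrow V_1/V_3\longrightarrow\Omg^1_X\longrightarrow 0,$$
so its class in $\mathrm{Ext}^1_X(\Omg^1_X,\Omg^{\otimes 2}_X)\cong H^1(X,\Omg^1_X)\cong k$ vanishes. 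The crux is then the identification of this rank-$2$ sub-quotient: I claim that $V_1/V_3\cong\mathcal{P}^1_X(\Omg^1_X)$, the sheaf of first principal parts of $\Omg^1_X$ (the degrees $3(2g-2)$ already agree), so that the class above is, up to a nonzero scalar, the Atiyah class of the line bundle $\Omg^1_X$, which under $H^1(X,\Omg^1_X)\cong k$ is a nonzero multiple of $c_1(\Omg^1_X)=2g-2$. Since $p>2$ this vanishes precisely when $p\mid(g-1)$, so a decomposition does force $p\mid(g-1)$.

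The step I expect to be the main obstacle is exactly this last identification --- equivalently, checking that the bottom two graded steps of the canonical filtration of $F^*F_*\Ox_X$ carry the weight-one jet structure, so that the relevant coefficient is $\deg\Omg^1_X=2g-2$ rather than some other expression in $p$. I would verify it in a local coordinate $t$, in which $F_*\Ox_X$ is free over $\Ox_{X_1}$ on $1,t,\dots,t^{p-1}$, by writing out $\nabla_{\mathrm{can}}$ and the transition cocycle between two coordinate charts and matching it with the Grothendieck connection on principal parts. For the converse implication ($p\mid(g-1)\Rightarrow$ decomposition) I would note that $p\mid(g-1)$ forces $p\mid\deg\Omg^1_X$, hence $\Omg^1_X$ admits a $p$-th root on $X$ (the abelian variety $\mathrm{Pic}^0(X)$ being divisible), and then invoke the Grothendieck--Raynaud construction producing the splitting of $F^*F_*\Ox_X$ from such a root. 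Finally, in the case $(p,g)=(3,2)$: since $3\nmid 1$, the necessity direction gives $F^*(B^1_X)\ncong\Omg^{\otimes 2}_X\oplus\Omg^1_X$, as claimed.
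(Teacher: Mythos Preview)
Your route to the Harder--Narasimhan filtration and to the necessity direction (splitting $\Rightarrow p\mid g-1$) is essentially the paper's, in slightly different packaging. The paper works directly on $X\times_k X$: writing $\I$ for the ideal of the diagonal and $\pi$ for the first projection, it identifies $X\times_{X_1}X$ with the $(p-1)$-st infinitesimal neighbourhood of $\Delta$, so that the canonical filtration is $V_i=\pi_*(\I^i)$ and in particular $V_1/V_3=\pi_*(\I/\I^3)$; it then quotes Grothendieck's (unpublished) computation that the class of this extension in $H^1(X,\Omg^1_X)$ is $(g-1)c$, with $c$ the canonical generator. Your reformulation via $V_1/V_3\cong\mathcal P^1_X(\Omg^1_X)$ and the Atiyah class of $\Omg^1_X$ is the same diagonal computation in other clothes---the principal-parts sheaf is itself built on the first infinitesimal neighbourhood of $\Delta$---and your value $2g-2$ agrees with the paper's $g-1$ up to the unit $2\in k^\times$ (this is where $p>2$ is used). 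So the ``main obstacle'' you flag is exactly the step the paper outsources to Grothendieck; your proposed local-coordinate check would amount to redoing that computation.

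The one genuine divergence is the converse ($p\mid g-1\Rightarrow$ splitting). The paper does not pass through a $p$-th root of $\Omg^1_X$; rather, once the bottom extension class $(g-1)c$ vanishes, it invokes $H^1(X,\Omg^{\otimes j}_X)=0$ for $j\ge 2$ to kill the remaining obstructions to splitting off each $\Omg^{\otimes i}_X$. This is short and self-contained. Your alternative---produce a $p$-th root of $\Omg^1_X$ from divisibility of $\Pic^0(X)$ and then ``invoke the Grothendieck--Raynaud construction''---is not spelled out, and it is not clear how the bare existence of such a root yields the decomposition; I would swap in the $H^1$-vanishing argument. That said, only the necessity direction is used downstream (for the $(p,g)=(3,2)$ non-splitting), and there your argument is complete.
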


\begin{proof}
Consider the fibred product of $X$ and $X$ over $X_1$
$$\xymatrix{
X\times_{X_1}X \ar@/_/[ddr]_{p_1} \ar@/^/[drr]^{p_2} \ar@{.>}[dr]\\
& X\times_kX \ar[d]^{\pi} \ar[r] & X \ar[d]\\
& X \ar[r]                & \Spec(k).}$$
Let $\triangle$ be the diagonal which is defined by an invertible ideal sheaf $\I$ on $X\times_kX$. Then $X\times_{X_1}X$ is the $(p-2)$-th infinitesimal neighborhood of $\triangle$ in $X\times_kX$, which is defined by ideal sheaf $\I^{p-1}$, and we have a filtration of ideal sheaves
$$0\subset\I^{p-1}\subset\I^{p-2}\subset\cdots\subset\I^2\subset\I\subset\Ox_{X\times_kX}.$$
Taking direct images of above filtration under the first projection $\pi:X\times_kX\rightarrow X$, we can get the filtration
$$0\subset\pi_*(\I^{p-1})\subset\pi_*(\I^{p-2})\subset\cdots\subset\pi_*(\I^2)\subset\pi_*(\I)\subset\pi_*(\Ox_{X\times_kX})$$
such that $\pi_*(\I^i)/\pi_*(\I^{i+1})\cong\mathrm{\Omega}^{\otimes i}_X$ for $1\leq i\leq p-1$ and $\pi_*(\I)\cong F^*(B^1_X)$.
The result of Grothendieck says: the extension of $\pi_*(\I^1)/\pi_*(\I^2)$ by $\pi_*(\I^2)/\pi_*(\I^3)$ corresponds to the element $(g-1)c$ of $H^1(X,\mathrm{\Omega}^1_X)$, where $c$ is the canonical base of $H^1(X,\mathrm{\Omega}^1_X)$. Hence this extension is trivial iff $p$ divides g-1. Then using $H^1(X,\mathrm{\Omega}^i_X)=0$ for $i\geq 2$, we see that, $F^*(B^1_X)$ is a direct sum of $\mathrm{\Omega}^{\otimes i}_X$ for $1\leq i\leq p-1$ iff $p|(g-1)$.
\end{proof}

\begin{Corollary}\label{Cor:NonSplitting}
Let $k$ be an algebraically closed field of characteristic $p>2$, $X$ a smooth projective curve of genus $g\geq 2$ over $k$, $F:X\rightarrow X_1:=X\times_kk$ the relative Frobenius morphism over $k$, and $\Ls$ a line bundle on $X$. Then the Harder-Narasimhan filtration of $F^*_X{F_X}_*(\Ls)$ is $$0=V_p\subset V_{p-1}\subset\cdots\subset V_{l+1}\subset V_l\subset\cdots\subset V_1\subset V_0=F^*F_*(\Ls)$$
such that $V_i/V_{i+1}\cong\mathrm{\Omega}^{\otimes i}_X\otimes\Ls$ for any $0\leq i\leq p-1$, and $p|(g-1)$ if and only if $$F^*_X{F_X}_*(\Ls)\cong(\mathrm{\Omega}^{\otimes p-1}_X\otimes\Ls)\oplus(\mathrm{\Omega}^{\otimes p-2}_X\otimes\Ls)\oplus\cdots\oplus(\mathrm{\Omega}^1_X\otimes\Ls)\oplus\Ls.$$
In particular, in the case $p=3$ and $g=2$, we have $$F^*_X{F_X}_*(\Ls)\ncong(\mathrm{\Omega}^{\otimes 2}_X\otimes\Ls)\oplus(\mathrm{\Omega}^1_X\otimes\Ls)\oplus\Ls.$$
\end{Corollary}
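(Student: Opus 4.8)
The plan is to read off the Harder--Narasimhan filtration from Lemma~\ref{Lem:Sun}, and to deduce the splitting criterion by rerunning the computation of Lemma~\ref{Lem:Grothendieck} with $\Ls$ twisted in. Since $k$ is perfect, the base change $X_1\to X$ is an isomorphism of schemes, so $F^*_X{F_X}_*(\Ls)=F^*F_*(\Ls)$ and the canonical connection and canonical filtration of Definition~\ref{CanFil} agree on the two sides. Applying Lemma~\ref{Lem:Sun}(2) with $\E=\Ls$ yields the canonical filtration $0=V_p\subset V_{p-1}\subset\cdots\subset V_0=F^*F_*(\Ls)$ with $V_i/V_{i+1}\cong\Ls\otimes\Omega^{\otimes i}_X$ for $0\le i\le p-1$; since $\Ls$ is stable and $g\ge2$, Lemma~\ref{Lem:Sun}(4) identifies this with the Harder--Narasimhan filtration (the slopes $i(2g-2)+\deg\Ls$ decrease strictly as $i$ runs from $p-1$ down to $0$). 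So everything except the equivalence $F^*F_*(\Ls)\cong\bigoplus_{i=0}^{p-1}\Omega^{\otimes i}_X\otimes\Ls\iff p\mid(g-1)$ is immediate, and the last assertion of the corollary is the case $(p,g)=(3,2)$, where $3\nmid1$.

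To prove that equivalence I would follow the proof of Lemma~\ref{Lem:Grothendieck} with $\Ls$ carried along. In its notation ($\triangle\subset X\times_kX$ with ideal sheaf $\I$, $X\times_{X_1}X$ the associated infinitesimal neighbourhood of $\triangle$, and $p_1,p_2\colon X\times_{X_1}X\to X$ the two projections), flat base change along the Cartesian square of relative Frobenius morphisms gives $F^*F_*(\Ls)=p_{1*}(p_2^*\Ls)$. Tensoring the $\I$-adic filtration of $\Ox_{X\times_{X_1}X}$ by the line bundle $p_2^*\Ls$ and applying the finite, hence exact, pushforward $p_{1*}$ produces a filtration of $F^*F_*(\Ls)$ whose $i$-th graded piece is $p_{1*}\bigl((\I^i/\I^{i+1})\otimes p_2^*\Ls\bigr)\cong\Omega^{\otimes i}_X\otimes\Ls$ ($0\le i\le p-1$), using that $p_1$ and $p_2$ both restrict to the identity on $\triangle\cong X$; by uniqueness this is the canonical filtration above.

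It then remains to analyse the extension classes of the successive graded pieces, which are the obstruction to the filtration being a direct sum. The class of $0\to\Omega^{\otimes(i+1)}_X\otimes\Ls\to V_i/V_{i+2}\to\Omega^{\otimes i}_X\otimes\Ls\to0$ lies in $\mathrm{Ext}^1(\Omega^{\otimes i}_X\otimes\Ls,\Omega^{\otimes(i+1)}_X\otimes\Ls)$, which is canonically $H^1(X,\Omega^1_X)\cong k$ --- independently of $i$ and of $\Ls$ --- and all the remaining $\mathrm{Ext}^1$ between graded pieces vanish because $H^1(X,\Omega^{\otimes j}_X)=0$ for $j\ge2$; so the filtration splits into a direct sum exactly when all of these consecutive classes vanish. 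Running Grothendieck's computation with the twist shows that these are precisely the classes occurring for $\Ls=\Ox_X$ --- tensoring by $p_2^*\Ls$ only tensors every sheaf involved by a line bundle restricting to $\Ls$ on $\triangle$, so it changes neither the second-order data of the diagonal embedding that produces the classes nor, via the canonical identification, the ambient $\mathrm{Ext}^1$ --- so by Lemma~\ref{Lem:Grothendieck} they all vanish if and only if $p\mid(g-1)$. Conversely, if $F^*F_*(\Ls)$ is such a direct sum then, the canonical filtration being canonical, it is the obvious split one, so all these classes vanish and $p\mid(g-1)$. This gives the corollary.

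The step I expect to be the main obstacle is this last one: checking carefully that the $\Ls$-twist leaves Grothendieck's extension-class computation intact, i.e. that the ``second fundamental form'' of the twisted filtration is still $(g-1)c$ (where $c$ generates $H^1(X,\Omega^1_X)$). This is morally clear for the reason just indicated, but making it rigorous requires going through the proof of Lemma~\ref{Lem:Grothendieck} and tracking how $p_2^*\Ls$ restricts to the infinitesimal neighbourhoods of $\triangle$, rather than quoting only the statement.
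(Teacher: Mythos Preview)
Your argument is correct, but the paper takes a much shorter route that bypasses the step you flag as the main obstacle. Instead of rerunning Grothendieck's computation with $\Ls$ twisted in, the paper observes the natural isomorphism
\[
F^*F_*(\Ls)\;\cong\;F^*F_*(\Ox_X)\otimes_{\Ox_X}\Ls,
\]
which is a one-line consequence of associativity of tensor products: $F^*F_*(\Ls)=\Ox_X\otimes_{\Ox_{X_1}}\Ls=(\Ox_X\otimes_{\Ox_{X_1}}\Ox_X)\otimes_{\Ox_X}\Ls$. Tensoring with a line bundle is an autoequivalence, so the splitting question for $\Ls$ is \emph{identical} to that for $\Ox_X$; no tracking of extension classes through the $\I$-adic construction is needed. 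For $\Ls=\Ox_X$ the paper then notes that the exact sequence $0\to\Ox_X\to F^*F_*(\Ox_X)\to F^*(B^1_X)\to 0$ (obtained by pulling back the defining sequence of $B^1_X$) is split --- the unit--counit composite $\Ox_X\to F^*F_*(\Ox_X)\to\Ox_X$ is the identity --- so $F^*F_*(\Ox_X)\cong F^*(B^1_X)\oplus\Ox_X$, and Lemma~\ref{Lem:Grothendieck} finishes.

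Your approach amounts to a disguised proof of the same tensor identity: showing that the successive extension classes of $F^*F_*(\Ls)$ agree, under the canonical identification $\mathrm{Ext}^1(G_i\otimes\Ls,G_{i+1}\otimes\Ls)\cong\mathrm{Ext}^1(G_i,G_{i+1})$, with those of $F^*F_*(\Ox_X)$ is exactly what the isomorphism above gives for free. So both arguments are valid; the paper's just avoids reopening the Grothendieck box, while yours has the virtue of making the extension-class structure explicit (and your observation that $H^1(X,\Omega_X^{\otimes j})=0$ for $j\ge2$ reduces full splitting to the vanishing of the consecutive classes is a nice point that the paper leaves implicit inside Lemma~\ref{Lem:Grothendieck}).
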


\begin{proof}
Applying Frobenius pull back to the exact sequence of locally free sheaves
$$0\rightarrow\Ox_{X_1}\rightarrow F_*(\Ox_X)\rightarrow B^1_X\rightarrow 0,$$
we have $$0\rightarrow\Ox_X\cong F^*(\Ox_{X_1})\rightarrow F^*F_*(\Ox_X)\rightarrow F^*(B^1_X)\rightarrow 0.$$
On the other hand, it is easy to see that the composition $\Ox_X\cong F^*(\Ox_{X_1})\rightarrow F^*F_*(\Ox_X)\rightarrow\Ox_X$
is an isomorphism. Therefore $F^*F_*(\Ox_X)=V_1\oplus\Ox_X$ and $V_1\cong F^*(B^1_X)$. Thus by Lemma \ref{Lem:Grothendieck} we have $p|(g-1)$ if and only if $$F^*_X{F_X}_*(\Ox_X)\cong\mathrm{\Omega}^{\otimes p-1}_X\oplus\mathrm{\Omega}^{\otimes p-2}_X\oplus\cdots\oplus\mathrm{\Omega}^1_X\oplus\Ox_X.$$
As $F^*F_*(\Ls)\cong F^*F_*(\Ox_X)\otimes\Ls$, so we have $p|(g-1)$ if and only if $$F^*_X{F_X}_*(\Ls)\cong(\mathrm{\Omega}^{\otimes p-1}_X\otimes\Ls)\oplus(\mathrm{\Omega}^{\otimes p-2}_X\otimes\Ls)\oplus\cdots\oplus(\mathrm{\Omega}^1_X\otimes\Ls)\oplus\Ls.$$
\end{proof}

\begin{Proposition}\label{DimofQuot}
Let $k$ be an algebraically closed field of characteristic $3$, $X$ a smooth projective curve of genus $2$ over $k$. Then
$$\Quot_X(3,0,\Pic^{(-1)}(X),\Ps^+_i)=\overline{\Quot_X(3,0,\Pic^{(-1)}(X),\Ps_i)},$$
and $\Quot_X(3,0,\Pic^{(-1)}(X),\Ps^+_i)$ are smooth irreducible projective varieties for $2\leq i\leq 4$,
$$\dim\Quot_X(3,0,\Pic^{(-1)}(X),\Ps^+_i)=\dim\Quot_X(3,0,\Pic^{(-1)}(X),\Ps_i)=
\begin{cases}
5, \mathrm{when}~i=2\\
4, \mathrm{when}~i=3\\
3, \mathrm{when}~i=4\\
\end{cases}$$
\end{Proposition}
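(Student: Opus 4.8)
The plan is to analyze the Quot scheme $\Quot_X(3,0,\Pic^{(-1)}(X))$ fibrewise over $\Pic^{(-1)}(X)$ and to identify each stratum $\Quot_X(3,0,\Ls,\Ps_i)$ with a more tractable parameter space built from the canonical filtration $0\subset E_2\subset E_1\subset F^*_X({F_X}_*(\Ls))$, whose graded pieces are $\Omg^{\otimes 2}_X\otimes\Ls$, $\Omg^1_X\otimes\Ls$, $\Ls$ by Corollary \ref{Cor:NonSplitting}. The key input is Proposition \ref{Prop:Intersection}: a point of $\Quot_X(3,0,\Ls)$ lies in the stratum $\Ps_i$ precisely according to the value $\deg(F^*_X(\E)\cap E_2)\in\{2,1,0\}$ for $i=4,3,2$, and in each case the Harder--Narasimhan filtration of $F^*_X(\E)$ is read off from the intersections with $E_1$, $E_2$. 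So the first step is to turn ``choosing $\E\subset{F_X}_*(\Ls)$'' into ``choosing the HN-pieces of $F^*_X(\E)$ inside the HN-pieces of $F^*_X({F_X}_*(\Ls))$''; since $F^*_X$ followed by the canonical connection recovers $\E$ (Lemma \ref{Lem:Sun}(2)), a subsheaf of $F^*_X({F_X}_*(\Ls))$ stable under $\nabla_{\mathrm{can}}$ is the same data as a subsheaf of ${F_X}_*(\Ls)$.

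Next I would treat the strata in order of decreasing $i$. For $i=4$: here $F^*_X(\E)\cap E_2$ has degree $2$, i.e.\ it is a sub-line-bundle of $E_2\cong\Omg^{\otimes 2}_X\otimes\Ls$ of degree $2$, hence of the form $\Omg^{\otimes 2}_X\otimes\Ls(-D)$ with $D$ effective of degree $2g-2-1 = 1$; and $F^*_X(\E)\cap E_1$ is then forced (degree $1$, sitting between $E_2$ and $E_1$). One counts: the choice of $\Ls$ is $g=2$ parameters, the choice of the degree-$2$ sub-line-bundle of $E_2$ is parametrised by $D\in X$, so $1$ parameter, and then one must count the extensions realising $F^*_X(\E)$ with the prescribed sub-line-bundles — a further quotient/extension computation that should come out to $0$, giving total dimension $2+1+0 = 3$. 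For $i=3$: $F^*_X(\E)\cap E_2$ has degree $1$, a codimension-$1$ condition relative to the generic degree-$0$ case, and $F^*_X(\E)\cap E_1$ has degree $1+1=2$; counting $\Ls$ ($2$), the degree-$1$ sub-line-bundle of $E_2\cong\Omg^{\otimes2}_X\otimes\Ls$ (effective divisor of degree $2g-2-1=1$, so $1$ parameter) and the remaining extension data should give dimension $4$. For $i=2$ (the generic, maximal stratum): $F^*_X(\E)\cap E_2 = 0$, only $F^*_X(\E)\cap E_1$ (degree $2$) matters, and the count $2$ (for $\Ls$) $+\,2$ (for the degree-$2$ sub-line-bundle of $E_1\cong\Omg^1_X\otimes\Ls$, a degree-$2g-2 = 2$ effective divisor) $+\,1$ (remaining extension, using $h^1$ of a suitable line bundle on a genus-$2$ curve) should give $5$.

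Having computed the dimensions, smoothness I would get from a deformation-theoretic argument: the Quot scheme at $[\E\hookrightarrow{F_X}_*(\Ls)]$ has tangent space $\Hom(\E,{F_X}_*(\Ls)/\E)$ and obstruction in $\Ext^1(\E,{F_X}_*(\Ls)/\E)$, and the fibration over $\Pic^{(-1)}(X)$ adds the smooth $g$-dimensional base; one checks the relevant $\Ext^1$ vanishes (or has the right constant dimension) using that $\E$ is semistable of slope $0$, ${F_X}_*(\Ls)$ is stable of slope $1/3$, and the quotient has slope $\geq 1/3$ — a Serre-duality / slope estimate on $X$ of genus $2$. Irreducibility of $\Quot_X(3,0,\Pic^{(-1)}(X),\Ps_i)$ then follows because it fibres over the irreducible $\Pic^{(-1)}(X)$ with fibres that are themselves towers of projective(-space) bundles (the successive choices above are each parametrised by a symmetric power of $X$ and a $\PP(\Ext)$ or $\PP(\Hom)$, all irreducible). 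Finally $\Quot_X(3,0,\Pic^{(-1)}(X),\Ps^+_i)=\overline{\Quot_X(3,0,\Pic^{(-1)}(X),\Ps_i)}$: the $\succcurlyeq\Ps_i$ locus is closed by Shatz semicontinuity pulled back along $\theta$, it contains the locally closed stratum, and by the dimension count the higher strata $\Ps_j$ with $\Ps_j\succ\Ps_i$ (here $j>i$) have strictly smaller dimension and lie in the closure, so the closed set is irreducible of the stated dimension and equals the closure of the open stratum.

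The main obstacle I expect is the honest extension/cohomology bookkeeping in the dimension counts — in particular showing the ``remaining extension data'' contributes exactly $0$ in the $\Ps_4$ case and the predicted amount in $\Ps_2,\Ps_3$, which requires pinning down the relevant $H^1$'s on the genus-$2$ curve and, crucially, using the \emph{non-splitting} of $F^*_X{F_X}_*(\Ls)$ from Corollary \ref{Cor:NonSplitting} (this is exactly what rules out the spurious decomposition $E_1 = E'\oplus E''$ invoked in the proof of Proposition \ref{Prop:Intersection}(2) and what forces the extension classes to be nonzero, hence the strata to have the dimensions claimed rather than larger ones). Secondarily, verifying smoothness means controlling the obstruction space uniformly over each stratum, not just generically, which may require a separate argument at the boundary where two of the intersection degrees jump.
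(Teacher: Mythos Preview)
Your route is substantially different from the paper's, and the bookkeeping you flag as ``the main obstacle'' is a genuine gap; moreover several of your explicit identifications are wrong. For $\Ps_2$, Proposition~\ref{Prop:Intersection}(3) says $\deg(F^*_X(\E)\cap E_2)=0$, not $F^*_X(\E)\cap E_2=0$: the intersection is always a line bundle. Likewise $E_1$ is a rank-$2$ bundle (it is $E_1/E_2$ that is $\Omg^1_X\otimes\Ls$), and in the $\Ps_2$ case $F^*_X(\E)\cap E_1$ has rank $2$ and degree $1$, not a ``degree-$2$ sub-line-bundle''. More seriously, choosing the intersections $F^*_X(\E)\cap E_i$ does not determine $F^*_X(\E)$, and an arbitrary filtered subsheaf of $F^*_X({F_X}_*(\Ls))$ with the right graded pieces need not be $\nabla_{\mathrm{can}}$-stable. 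Your Cartier-descent remark is correct in principle, but you give no mechanism for counting which filtered subsheaves are actually Frobenius pullbacks, and the extension counts you sketch (symmetric powers of $X$ times a $\PP(\mathrm{Ext})$) parametrize the wrong thing.

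The paper bypasses all of this with a much simpler observation you have missed: since $\rk(\E)=\rk({F_X}_*(\Ls))=3$ and $\deg({F_X}_*(\Ls))-\deg(\E)=1$, the quotient ${F_X}_*(\Ls)/\E$ is a skyscraper of length~$1$. The resulting morphism
\[
\Pi:\Quot_X(3,0,\Pic^{(-1)}(X))\longrightarrow X\times\Pic^{(-1)}(X),\qquad [\E\hookrightarrow{F_X}_*(\Ls)]\mapsto(\Supp({F_X}_*(\Ls)/\E),[\Ls]),
\]
has fibre $\PP^2$ (the hyperplanes in the $3$-dimensional fibre ${F_X}_*(\Ls)\otimes k(x)$), so the whole Quot scheme is a $\PP^2$-bundle over $X\times\Pic^{(-1)}(X)$, visibly smooth, irreducible, projective of dimension $5$. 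A direct local computation at $x$ --- writing $\widehat\Ox_x\cong k[[t^3]]\subset k[[t]]\cong\widehat{{F_X}_*(\Ls)}_x$ and using the explicit basis $(t\otimes1-1\otimes t)^j$ of the canonical filtration --- shows that $\deg(F^*_X(\E)\cap E_2)\geq 1$ iff $t^2\in V_\E$ and $\geq 2$ iff $\{t,t^2\}\subset V_\E$. These are linear conditions on the hyperplane $V_\E$, cutting out $\PP^1\subset\PP^2$ and a point in $\PP^1$ respectively, so the strata $\Ps^+_i$ are themselves $\PP^{4-i}$-bundles over $X\times\Pic^{(-1)}(X)$. Smoothness, irreducibility, projectivity and the dimensions all follow immediately from this bundle structure; no obstruction spaces or $H^1$ estimates enter, and the closure statement is automatic because each stratum is a nonempty open set in the next $\PP^{4-i}$-bundle.
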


\begin{proof}
By \cite{Grothendieck95}, there is a morphism
\begin{eqnarray*}
\Pi:\Quot_X(3,0,\Pic^{(-1)}(X))&\rightarrow&X\times\Pic^{(-1)}(X)\\
{[\E\hookrightarrow{F_X}_*(\Ls)]}&\mapsto&(\Supp({F_X}_*(\Ls)/\E),\Ls).
\end{eqnarray*}
For any point $x\in X$ and any $\Ls\in\Pic^{(-1)}(X)$, we denote the fiber of $\Pi$ over $(x,[\Ls])$ by $\Quot_X(3,0,\Ls,x)$. Then there is a one to one correspondence between the set of closed points $[\E\hookrightarrow{F_X}_*(\Ls)]$ of $\Quot_X(3,0,\Ls,x)$ and the set of $\Ox_x$-submodules $V\subset{F_X}_*(\Ls)_x$ such that ${F_X}_*(\Ls)_x/V\cong k$. The latter has a natural structure of algebraic variety which is isomorphic to projective space $\PP^2_k$. Hence $\Quot_X(3,0,\Pic^{(-1)}(X))$ is a smooth irreducible projective variety of dimension $5$. Without loss of generality, we can assume that $\Ox_x\cong k[[t^3]]$, then ${F_X}_*(\Ls)_x\cong k[[t]]$ endows with $k[[t^3]]$-module structure induced by injection $k[[t^3]]\hookrightarrow k[[t]]$ and $$F^*_X({F_X}_*(\Ls))_x\cong k[[t]]\otimes_{k[[t^3]]}k[[t]].$$

Suppose that the $\Ox_x$-submodule $\E_x$ of ${F_X}_*(\Ls)_x$ corresponds to the $k[[t^3]]$-submodule $V_{\E}$ of $k[[t]]$, then the $\Ox_x$-submodule $F^*_X(\E)_x$ of $F^*_X({F_X}_*(\Ls))_x$ corresponds to the $k[[t]]$-submodule $V_{\E}\otimes_{k[[t^3]]}k[[t]]$ of $k[[t]]\otimes_{k[[t^3]]}k[[t]]$.

Consider the decomposition of $k[[t]]=k[[t^3]]\oplus k[[t^3]]\cdot t\oplus k[[t^3]]\cdot t^2$ as $k[[t^3]]$-module. Then the $k[[t^3]]$-submodule $V_{\E}\subset k[[t]]$ with $k[[t]]/V_{\E}\cong k$ implies that $$k[[t^3]]\cdot t^3\oplus k[[t^3]]\cdot t^4\oplus k[[t^3]]\cdot t^5 \subset V_{\E}.$$

Now, we investigate the intersection of $F^*_X(\E)$ with the canonical filtration $$0\subset E_2\subset E_1\subset F^*_X({F_X}_*(\Ls)).$$
Locally, the stalk ${E_1}_x$ has a basis $\{t\otimes 1-1\otimes t, (t\otimes 1-1\otimes t)^2\}$ and ${E_2}_x$ has a basis $\{(t\otimes 1-1\otimes t)^2\}$ as $k[[t]]$-submodules of $F^*_X({F_X}_*(\Ls))_x\cong k[[t]]\otimes_{k[[t^3]]}k[[t]]$ by \cite[Lemma 3.2]{Sun08i}. Let $[\E\hookrightarrow{F_X}_*(\Ls)]\in\Quot_X(3,0,\Ls,x)(k)$, we claim that
\begin{itemize}
\item[$(a)$] $(t\otimes 1-1\otimes t)^2\notin V_{\E}\otimes_{k[[t^3]]}k[[t]]$
\item[$(b)$] $(t\otimes 1-1\otimes t)^2t\in V_{\E}\otimes_{k[[t^3]]}k[[t]]$ if and only if $\{t,t^2\}\subset V_{\E}$.
\item[$(c)$] $(t\otimes 1-1\otimes t)^2t^2\in V_{\E}\otimes_{k[[t^3]]}k[[t]]$ if and only if $t^2\in V_{\E}$.
\item[$(d)$] $(t\otimes 1-1\otimes t)^2t^3\in V_{\E}\otimes_{k[[t^3]]}k[[t]]$.
\end{itemize}

Suppose that $(t\otimes 1-1\otimes t)^2\in V_{\E}\otimes_{k[[t^3]]}k[[t]]$, then we have $$V_{\E}\otimes_{k[[t^3]]}k[[t]]=(t\otimes 1-1\otimes t)^2k[[t]].$$ It follows that $F^*_X(\E)\cap E_2=E_2$ , so $\deg(F^*_X(\E)\cap E_2)=3$. This contradicts to Proposition \ref{Prop:Intersection}.

Since
\begin{eqnarray*}
(t\otimes 1-1\otimes t)^2t&=&t^2\otimes t-2t\otimes t^2+1\otimes t^3\\
&=&t^2\otimes t-2t\otimes t^2+t^3\otimes 1
\end{eqnarray*}
and $\{t^3\}\subset V_{\E}$ by $k[[t^3]]\cdot t^3\oplus k[[t^3]]\cdot t^4\oplus k[[t^3]]\cdot t^5 \subset V_{\E}$, then we have $$(t\otimes 1-1\otimes t)^2t\in V_{\E}\otimes_{k[[t^3]]}k[[t]]~\mathrm{if~and~only~if}~t^2\otimes t-2t\otimes t^2\in V_{\E}\otimes_{k[[t^3]]}k[[t]],$$ which is equivalent to $\{t,t^2\}\subset V_{\E}$.

Since
\begin{eqnarray*}
(t\otimes 1-1\otimes t)^2t^2&=&t^2\otimes t^2-2t\otimes t^3+1\otimes t^4\\
&=&t^2\otimes t^2-2t^4\otimes 1+t^3\otimes t
\end{eqnarray*}
and $\{t^3,t^4\}\subset V_{\E}$ by $k[[t^3]]\cdot t^3\oplus k[[t^3]]\cdot t^4\oplus k[[t^3]]\cdot t^5 \subset V_{\E}$, then we have $$(t\otimes 1-1\otimes t)^2t^2\in V_{\E}\otimes_{k[[t^3]]}k[[t]]~\mathrm{if~and~only~if}~t^2\otimes t^2\in V_{\E}\otimes_{k[[t^3]]}k[[t]],$$ which is equivalent to $x^2\in V_{\E}$.

Since
\begin{eqnarray*}
(t\otimes 1-1\otimes t)^2t^3&=&t^2\otimes t^3-2t\otimes t^4+1\otimes t^5\\
&=&t^5\otimes 1-2t^4\otimes t+t^3\otimes t^2
\end{eqnarray*}
and $\{t^3,t^4,t^5\}\subset V_{\E}$ by $k[[t^3]]\cdot t^3\oplus k[[t^3]]\cdot t^4\oplus k[[t^3]]\cdot t^5 \subset V_{\E}$. It follows that $(t\otimes 1-1\otimes t)^2t^3\in V_{\E}\otimes_{k[[t^3]]}k[[t]]$.

In summary, by above claim, we have $$1\leq\dim {E_2}_x/((V_{\E}\otimes_{k[[t^3]]}k[[t]])\cap {E_2}_x)\leq 3.$$
Precisely,
$$\dim {E_2}_x/((V_{\E}\otimes_{k[[t^3]]}k[[t]])\cap {E_2}_x)=
\begin{cases}
1& \text{if ond only if}~\{t,t^2\}\subset V_{\E}\\
2& \text{if ond only if}~t\notin V_{\E}~\text{and}~t^2\in V_{\E}\\
3& \text{if ond only if}~t^2\notin V_{\E}
\end{cases}$$

Consider the exact sequence of $\Ox_X$-modules
$$0\rightarrow F^*_X(\E)\cap E_2\rightarrow E_2\rightarrow E_2/(F^*_X(\E)\cap E_2)\rightarrow0.$$
Notice that $E_2/(F^*_X(\E)\cap E_2)={E_2}_x/((V_{\E}\otimes_{k[[t^3]]}k[[t]])\cap{E_2}_x)$. Therefore, by Proposition \ref{Prop:Intersection}, we have
\begin{eqnarray*}
\HNP(F^*_X(\E))=\Ps_2 &\Leftrightarrow& \deg(F^*_X(\E)\cap E_2)=0\\
&\Leftrightarrow& \deg({E_2}_x/((V_{\E}\otimes_{k[[t^3]]}k[[t]])\cap{E_2}_x))=3\\
&\Leftrightarrow& t^2\notin V_{\E}.
\end{eqnarray*}

\begin{eqnarray*}
\HNP(F^*_X(\E))=\Ps_3 &\Leftrightarrow& \deg(F^*_X(\E)\cap E_2)=1\\
&\Leftrightarrow& \deg({E_2}_x/((V_{\E}\otimes_{k[[t^3]]}k[[t]])\cap{E_2}_x))=2\\
&\Leftrightarrow& t\notin V_{\E}~\text{and}~t^2\in V_{\E}.
\end{eqnarray*}

\begin{eqnarray*}
\HNP(F^*_X(\E))=\Ps_4 &\Leftrightarrow& \deg(F^*_X(\E)\cap E_2)=2\\
&\Leftrightarrow& \deg({E_2}_x/((V_{\E}\otimes_{k[[t^3]]}k[[t]])\cap{E_2}_x))=1\\
&\Leftrightarrow& \{t,t^2\}\subset V_{\E}.
\end{eqnarray*}

For $i=2,3,4$, let $\Quot_X(3,d,\Ls_x,\Ps^+_i(d))$ be the closed subschemes of $\Quot_X(3,d,\Ls_x)$ consisting of closed points $$\Quot_X(3,0,\Ls,x,\Ps^+_i)(k)=\{~[\E\hookrightarrow{F_X}_*(\Ls)]\in\Quot_X(3,0,\Ls,x)(k)~|~\HNP(F^*_X(\E))\succcurlyeq\Ps_i~\}$$
Then
\begin{eqnarray*}
\Quot_X(3,0,\Ls,x,\Ps^+_2)&\cong&\{V|~k[[t^3]]\text{-submodule}~V\subset k[[t]],k[[t]]/V\cong k\}\cong\PP^2_k\\
\Quot_X(3,0,\Ls,x,\Ps^+_3)&\cong&\{V|~k[[t^3]]\text{-submodule}~V\subset k[[t]],k[[t]]/V\cong k,t^2\in V_{\E}\}\cong\PP^1_k\\
\Quot_X(3,0,\Ls,x,\Ps^+_4)&\cong&\{V|~k[[t^3]]\text{-submodule}~V\subset k[[t]],k[[t]]/V\cong k,\{t,t^2\}\subset V_{\E}\}\cong\{p\}.
\end{eqnarray*}
So $\Quot_X(3,0,\Pic^{(-1)}(X),\Ps^+_i)$ are smooth irreducible projective varieties for $2\leq i\leq 4$, and
$$\dim\Quot_X(3,0,\Pic^{(-1)}(X),\Ps^+_i)=
\begin{cases}
5, \mathrm{when}~i=2\\
4, \mathrm{when}~i=3\\
3, \mathrm{when}~i=4\\
\end{cases}$$
Since $\Quot_X(3,0,\Pic^{(-1)}(X),\Ps_i)$ is an open subvariety of $\Quot_X(3,0,\Pic^{(-1)}(X),\Ps^+_i)$ for any $2\leq i\leq 4$, we have $$\Quot_X(3,0,\Pic^{(-1)}(X),\Ps^+_i)=\overline{\Quot_X(3,0,\Pic^{(-1)}(X),\Ps_i)},$$ $$\dim\Quot_X(3,0,\Pic^{(-1)}(X),\Ps^+_i)=\dim\Quot_X(3,0,\Pic^{(-1)}(X),\Ps_i).$$
\end{proof}

\section{Frobenius Stratification of Moduli Spaces}\

We first introduce some notations which will be used in this section. Let $k$ be an algebraically closed field of characteristic $p>0$, $X$ a smooth projective curve of genus $g$ over $k$. Let $r$, $d$ and $t$ be integers with $r>0$, and $\Ps\in\ConPgn(r,pd)$. Then we define the subschemes of $\Quot_X(3,0,\Pic^{(-1)}(X))$ and $\Quot^\sharp_X(r,d,\Pic^{(t)}(X),\Ps)$ in the sense of closed points as the following:
\begin{eqnarray*}
\Quot^s_X(r,d,\Pic^{(t)}(X))(k)&:=&\{~[\E\hookrightarrow{F_X}_*(\Ls)]\in\Quot_X(r,d,\Pic^{(t)}(X))(k)~|~\E~\text{is stable}~\}\\
\Quot^s_X(r,d,\Pic^{(t)}(X),\Ps)(k)&:=&\{~[\E\hookrightarrow{F_X}_*(\Ls)]\in\Quot_X(r,d,\Pic^{(t)}(X),\Ps)(k)~|~\E~\text{is stable}~\}.\\
\Quot^s_X(r,d,\Pic^{(t)}(X),\Ps^+)(k)&:=&\{~[\E\hookrightarrow{F_X}_*(\Ls)]\in\Quot_X(r,d,\Pic^{(t)}(X),\Ps^+)(k)~|~\E~\text{is stable}~\}\\
\Quot^{s,\sharp}_X(r,d,\Pic^{(t)}(X))(k)&:=&\{~[\E\hookrightarrow{F_X}_*(\Ls)]\in\Quot^\sharp_X(r,d,\Pic^{(t)}(X))(k)~|~\E~\text{is stable}~\}\\
\Quot^{s,\sharp}_X(r,d,\Pic^{(t)}(X),\Ps)(k)&:=&\{~[\E\hookrightarrow{F_X}_*(\Ls)]\in\Quot^\sharp_X(r,d,\Pic^{(t)}(X),\Ps)(k)~|~\E~\text{is stable}~\}.\\
\Quot^{s,\sharp}_X(r,d,\Pic^{(t)}(X),\Ps^+)(k)&:=&\{~[\E\hookrightarrow{F_X}_*(\Ls)]\in\Quot^\sharp_X(r,d,\Pic^{(t)}(X),\Ps^+)(k)~|~\E~\text{is stable}~\}
\end{eqnarray*}

We now study the geometric properties of Frobenius strata in the Frobenius stratification of moduli space $\M^s_X(3,0)$, where $X$ is a smooth projective curve of genus $2$ over an algebraically closed field $k$ of characteristic $3$.

In the case $(p,g,r,d)=(3,2,3,0)$, by Proposition \ref{Prop:Subsheaf}, there is a proper morphism
\begin{eqnarray*}
\theta:\Quot_X(3,0,\Pic^{(-1)}(X))&\rightarrow&\M^{ss}_X(3,0)\\
{[\E\hookrightarrow{F_X}_*(\Ls)]}&\mapsto&[\E].
\end{eqnarray*}

By restricting the morphism $\theta$ to the stable locus $\Quot^s_X(3,0,\Pic^{(-1)}(X))$ of $\Quot_X(3,0,\Pic^{(-1)}(X))$, then we have a morphism from $\Quot^s_X(3,0,\Pic^{(-1)}(X))$ to $\M^s_X(3,0)$, denoted by $\theta^s$. Hence, $\theta^s$ is a proper morphism.

\begin{Proposition}\label{Prop:InjMorphism}
Let $k$ be an algebraically closed field of characteristic $3$, $X$ a smooth projective curve of genus $2$ over $k$. Then the image of the morphism
\begin{eqnarray*}
\theta^s:\Quot^s_X(3,0,\Pic^{(-1)}(X))&\rightarrow&\M^s_X(3,0)\\
{[\E\hookrightarrow{F_X}_*(\Ls)]}&\mapsto&[\E]
\end{eqnarray*}
is the subset
$$\{~[\E]\in\M^s_X(3,0)(k)~|~\HNP(F^*_X(\E))\in\{\Ps_2,\Ps_3,\Ps_4\}~\}.$$
Moreover, the restriction $\theta^s|_{\Quot^{s,\sharp}_X(3,0,\Pic^{(-1)}(X))}$ is an injective morphism and the image of $\theta^s|_{\Quot^{s,\sharp}_X(3,0,\Pic^{(-1)}(X))}$ is the subset
$$\{~[\E]\in\M^s_X(3,0)(k)~|~\HNP(F^*_X(\E))\in\{\Ps_2,\Ps_3\}~\}.$$
\end{Proposition}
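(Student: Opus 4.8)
The plan is to establish the three assertions in turn: (i) the image of $\theta^s$ is exactly the locus of stable bundles with $\HNP(F^*_X(\E))\in\{\Ps_2,\Ps_3,\Ps_4\}$; (ii) the restriction to $\Quot^{s,\sharp}_X$ is injective; (iii) that restricted image is the locus with $\HNP(F^*_X(\E))\in\{\Ps_2,\Ps_3\}$. For (i), surjectivity onto that locus is the content of Proposition \ref{Prop:Injection} together with the remark following Proposition \ref{Prop:Subsheaf}: any Frobenius-destabilized stable $\E$ with $\HNP(F^*_X(\E))\neq\Ps_1$ admits a nontrivial map $F^*_X(\E)\to\Ls$ for some $\Ls$ of degree $-1$, hence by adjunction an injection $\E\hookrightarrow{F_X}_*(\Ls)$, i.e. a point of $\Quot^s_X(3,0,\Pic^{(-1)}(X))$ mapping to $[\E]$. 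Conversely, for any point $[\E\hookrightarrow{F_X}_*(\Ls)]$ in the source, Proposition \ref{Prop:Intersection} shows $\HNP(F^*_X(\E))\in\{\Ps_2,\Ps_3,\Ps_4\}$; combined with Proposition \ref{Prop:Subsheaf} (so $\E$ is automatically semistable, and stability is imposed) this identifies the image precisely.

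For the injectivity statement, suppose $[\E_1\hookrightarrow{F_X}_*(\Ls_1)]$ and $[\E_2\hookrightarrow{F_X}_*(\Ls_2)]$ lie in $\Quot^{s,\sharp}_X$ and $\E_1\cong\E_2=:\E$. By definition of $\sharp$, the adjoint map $F^*_X(\E)\to\Ls_i$ is surjective for $i=1,2$; its kernel is thus the unique maximal destabilizing subsheaf or the second step of the Harder–Narasimhan filtration of $F^*_X(\E)$ — more precisely, by Proposition \ref{Prop:Intersection}(2),(3) the kernel is $F^*_X(\E)\cap E_1$ in the $\Ps_2$ case and again canonically determined in the $\Ps_3$ case, i.e. intrinsic to $\E$. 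Hence the surjection $F^*_X(\E)\twoheadrightarrow\Ls_i$ is, up to scalar, the canonical quotient of $F^*_X(\E)$ by $\mu_{\min}$-part, so $\Ls_1\cong\Ls_2$ and the two quotients $F^*_X(\E)\to\Ls$ agree up to a scalar; taking adjoints, the two embeddings $\E\hookrightarrow{F_X}_*(\Ls)$ differ by an automorphism of ${F_X}_*(\Ls)$ coming from a scalar, hence define the same point of the Quot scheme. This gives injectivity on $k$-points; since $\theta^s$ is proper (noted just before the Proposition) and the source and target are varieties over an algebraically closed field, injectivity on closed points suffices for the stated conclusion.

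For (iii), by Proposition \ref{Prop:Intersection}(1) the condition $[\E\hookrightarrow{F_X}_*(\Ls)]\in\Quot^\sharp$ (surjectivity of $F^*_X(\E)\to\Ls$) is equivalent to $\deg(F^*_X(\E)\cap E_2)\leq 1$, which rules out $\Ps_4$ and leaves exactly $\{\Ps_2,\Ps_3\}$; together with (i) and the fact that every stable $\E$ with $\HNP(F^*_X(\E))\in\{\Ps_2,\Ps_3\}$ produces (via Proposition \ref{Prop:Injection}) an embedding that is automatically in $\Quot^\sharp$ by Proposition \ref{Prop:Intersection}(2),(3), the image is identified.

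I expect the main obstacle to be the injectivity argument: one must check carefully that, for a stable $\E$ with $\HNP(F^*_X(\E))\in\{\Ps_2,\Ps_3\}$, the surjection $F^*_X(\E)\twoheadrightarrow\Ls$ factoring through the minimal-slope quotient is unique up to scalar — this uses that the relevant graded piece $F^*_X(\E)/(F^*_X(\E)\cap E_1)$ is a line bundle (so $\Hom$ into $\Ls$ is one-dimensional once the bundles agree) — and then that distinct scalars give the same Quot point because they differ by the scalar automorphism of ${F_X}_*(\Ls)$. The rest is bookkeeping with Propositions \ref{Prop:Injection}, \ref{Prop:Subsheaf} and \ref{Prop:Intersection}.
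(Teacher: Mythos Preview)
Your proposal is correct and follows essentially the same route as the paper: both use Proposition~\ref{Prop:Intersection} to pin down which Harder--Narasimhan polygons occur for subsheaves of ${F_X}_*(\Ls)$, Proposition~\ref{Prop:Injection} to produce the embedding from a nontrivial map $F^*_X(\E)\to\Ls$, and the uniqueness of the Harder--Narasimhan filtration (specifically, that the surjection $F^*_X(\E)\twoheadrightarrow\Ls$ must be the canonical quotient by the penultimate step since $\mu_{\min}(F^*_X(\E))=-1$) to establish injectivity on $\Quot^{s,\sharp}_X$. Your remark about properness upgrading pointwise injectivity is superfluous---the paper only claims and proves injectivity on $k$-points---but it does no harm.
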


\begin{proof}
Let $[\E\hookrightarrow{F_X}_*(\Ls)]\in\Quot_X(3,0,\Ls)(k)$, then $\HNP(F^*_X(\E))\in\{\Ps_2,\Ps_3,\Ps_4\}$ by Proposition \ref{Prop:Intersection}. It follows that the image of $\theta$ lies in the following subset $$\{~[\E]\in\M^s_X(3,0)(k)~|~\HNP(F^*_X(\E))\in\{\Ps_2,\Ps_3,\Ps_4\}~\}.$$

On the other hand, let $[\E]\in\M^s_X(3,0)(k)$ such that $\HNP(F^*_X(\E))\in\{\Ps_2,\Ps_3,\Ps_4\}$. Then $F^*_X(\E)$ has a quotient line bundle $\Ls'$ of $\deg(\Ls')\leq-1$. Embedding $\Ls'$ into some line bundle $\Ls$ of $\deg(\Ls)=-1$, then we have the non-trivial homomorphism $$F^*_X(\E)\twoheadrightarrow\Ls'\hookrightarrow\Ls.$$ Then the adjoint homomorphism $\E\hookrightarrow{F_X}_*(\Ls)$ is an injection by Proposition \ref{Prop:Injection}. Hence, the image of $\theta$ is $$\{~[\E]\in\M^s_X(3,0)(k)~|~\HNP(F^*_X(\E))\in\{\Ps_2,\Ps_3,\Ps_4\}~\}.$$

Now, we will prove $\theta|_{\Quot^{s,\sharp}_X(3,0,\Pic^{(-1)}(X))}$ is an injective morphism. Let $$e_i:=[\E_i\hookrightarrow{F_X}_*(\Ls_i)]\in\Quot^{s,\sharp}_X(3,0,\Pic^{(-1)}(X))(k),$$ where $\Ls_i\in\Pic^{(-1)}(X)$, $i=1,2$. Suppose that $\theta(e_1)=\theta(e_2)\in\M^s_X(3,0)(k)$, i.e. $\E_1\cong\E_2$. Since $\HNP(F^*_X(\E))\in\{\Ps_2,\Ps_3\}$, we have $\mu_{\mathrm{min}}(F^*_X(\E_i))=-1$. So the surjection $F^*_X(\E_i)\rightarrow\Ls_i$ implies that $\Ls_i$ is the quotient line bundle of $F^*_X(\E_i)$ with minimal slope in the Harder-Narasimhan filtration of $F^*_X(\E_i)$ for $i=1,2$. By the uniqueness of Harder-Narasimhan filtration, there exists an isomorphism $\psi:\Ls_1\rightarrow\Ls_2$ making the following diagram
$$\xymatrix{
  F^*_X(\E_1) \ar[r]\ar[d]_{\phi}^{\cong} & \Ls_1 \ar[r]\ar[d]_{\psi}^{\cong} & 0\\
  F^*_X(\E_2) \ar[r] & \Ls_2 \ar[r] & 0}$$
commutative, where the isomorphism $\phi$ is induced from an isomorphism $\E_1\stackrel{\cong}{\rightarrow}\E_2$. By adjunction, we have the commutative diagram
$$\xymatrix{
  0 \ar[r] & \E_1 \ar[r]\ar[d]^{\cong} & {F_X}_*(\Ls_1) \ar[d]_{{F_X}_*(\psi)}^{\cong}\\
  0 \ar[r] & \E_2 \ar[r] & {F_X}_*(\Ls_2)}$$
where the vertical homomorphism is the isomorphism $${F_X}_*(\psi):{F_X}_*(\Ls_1)\stackrel{\cong}{\rightarrow}{F_X}_*(\Ls_2).$$ This implies $\E_1=\E_2$ as subsheaves of ${F_X}_*(\Ls)$, where $$[\Ls]=[\Ls_1]=[\Ls_2]\in\Pic^{(-1)}(X)(k).$$ Thus, $e_1$ and $e_2$ are the some point in the $\Quot^{s,\sharp}_X(3,0,\Pic^{(-1)}(X))$. Hence the morphism $\theta|_{\Quot^{s,\sharp}_X(3,0,\Pic^{(-1)}(X))}$ is injective.

Let $[\E]\in\M^s_X(3,0)(k)$ and $\HNP(F^*_X(\E))\in\{\Ps_2,\Ps_3\}$. Then $F^*_X(\E)$ has a quotient line bundle $\Ls$ of $\deg(\Ls)=-1$. Then by Proposition \ref{Prop:Injection}, the adjoint homomorphism $\E\hookrightarrow{F_X}_*(\Ls)$ is an injective homomorphism. Therefore $$e:=[\E\hookrightarrow{F_X}_*(\Ls)]\in\Quot^{s,\sharp}_X(3,0,\Pic^{(-1)}(X))(k)$$ and $\theta(e)=[\E]$. Hence the image of $\theta|_{\Quot^{s,\sharp}_X(3,0,\Pic^{(-1)}(X))}$ is the subset
$$\{~[\E]\in\M^s_X(3,0)(k)~|~\HNP(F^*_X(\E))\in\{\Ps_2,\Ps_3\}~\}.$$
\end{proof}

We can obtain the geometric properties of Frobenius strata in the Frobenius stratification of moduli space $\M^s_X(3,0)$ from the geometric properties of Frobenius strata in $\Quot^s_X(3,0,\Pic^{(-1)}(X))$, where $X$ is a smooth projective curve of genus $2$ over an algebraically closed field $k$ of characteristic $3$.

\begin{Theorem}\label{Thm:FrobStra}
Let $k$ be an algebraically closed field of characteristic $3$, $X$ a smooth projective curve of genus $2$ over $k$. Then
\begin{itemize}
    \item[$(1)$] $S_X(3,0,\Ps^+_1)\cong S_X(3,0,\Ps^+_2)$, $S_X(3,0,\Ps_1)\cong S_X(3,0,\Ps_2)$, and $$S_X(3,0,\Ps^+_1)\cap S_X(3,0,\Ps^+_2)=S_X(3,0,\Ps^+_3).$$
    \item[$(2)$] $S_X(3,0,\Ps^+_i)=\overline{S_X(3,0,\Ps_i)}$, $S_X(3,0,\Ps_i)$ and $S_X(3,0,\Ps^+_i)$ are irreducible quasi-projective varieties for $1\leq i\leq 4$, and $$\dim S_X(3,0,\Ps^+_i)=\dim S_X(3,0,\Ps_i)=
\begin{cases}
5, \mathrm{when}~i=1\\
5, \mathrm{when}~i=2\\
4, \mathrm{when}~i=3\\
2, \mathrm{when}~i=4\\
\end{cases}$$
\end{itemize}
\end{Theorem}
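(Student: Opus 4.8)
The plan is to transport the geometry of the Frobenius strata of $\M^s_X(3,0)$ from that of the Frobenius strata of $\Quot_X(3,0,\Pic^{(-1)}(X))$ (Proposition \ref{DimofQuot}) through the proper morphism $\theta^s$, using the image and injectivity statements of Proposition \ref{Prop:InjMorphism} together with the classification of Section 2; the stratum $\Ps_1$ is then recovered from $\Ps_2$ by the duality involution of $\M^s_X(3,0)$, and the stratum $\Ps_4$ is treated directly. Throughout write $Q_i:=\Quot_X(3,0,\Pic^{(-1)}(X),\Ps^+_i)$.

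First I would dispose of part $(1)$. By Section 2 the only polygons occurring as $\HNP(F^*_X(\E))$ are the trivial one and $\Ps_1,\dots,\Ps_4$, and in $\ConPgn(3,0)$ one checks directly that $\Ps_4\succ\Ps_3\succ\Ps_1$, $\Ps_3\succ\Ps_2$, while $\Ps_1,\Ps_2$ are incomparable with least upper bound $\Ps_3$; hence $S_X(3,0,\Ps^+_1)\cap S_X(3,0,\Ps^+_2)$ consists exactly of the $[\E]$ with $\HNP(F^*_X(\E))\in\{\Ps_3,\Ps_4\}$, which is $S_X(3,0,\Ps^+_3)$. For the two isomorphisms, use $\sigma\colon[\E]\mapsto[\E^\vee]$, an automorphism of the variety $\M^s_X(3,0)$: since $F^*_X(\E^\vee)\cong(F^*_X(\E))^\vee$ and for a rank $3$, degree $0$ bundle dualization reflects the Harder--Narasimhan polygon through the line $\mathrm{rank}=3/2$ (interchanging a slope with its negative), $\sigma$ is order preserving, exchanges the polygons $\Ps_1$ and $\Ps_2$ and fixes $\Ps_3,\Ps_4$. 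Hence $\sigma$ restricts to isomorphisms $S_X(3,0,\Ps_1)\xrightarrow{\sim}S_X(3,0,\Ps_2)$ and $S_X(3,0,\Ps^+_1)\xrightarrow{\sim}S_X(3,0,\Ps^+_2)$, and transports the remaining assertions of $(2)$ for $i=1$ from the case $i=2$. It remains to treat $i=2,3,4$.

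For $i=2,3$: by Proposition \ref{DimofQuot}, $Q_i$ is irreducible of dimension $5$ resp. $4$ and $\Quot_X(3,0,\Pic^{(-1)}(X),\Ps_i)$ is dense open in it, so it suffices to know the open stable locus is non-empty. For $i=2$ this holds since $\Quot_X(3,0,\Pic^{(-1)}(X))$ is irreducible of dimension $5$ with non-empty (hence dense) stable locus by Proposition \ref{Prop:Subsheaf}, which therefore meets the dense open $\Ps_2$-stratum; for $i=3$ the stable locus of $Q_3$ is non-empty because it contains the $\Ps_4$-stratum (next paragraph), hence is a dense open of the irreducible $4$-fold $Q_3$, and deleting the closed $3$-dimensional $\Quot_X(3,0,\Pic^{(-1)}(X),\Ps_4)$ leaves a non-empty dense open. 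Since $\HNP(F^*_X(\E))\in\{\Ps_2,\Ps_3\}$ forces the adjoint map to be surjective (Proposition \ref{Prop:Intersection}), these stable $\Ps_i$-loci lie in $\Quot^{s,\sharp}$, where $\theta^s$ is injective with image $\{[\E]:\HNP(F^*_X(\E))\in\{\Ps_2,\Ps_3\}\}$ (Proposition \ref{Prop:InjMorphism}); matching polygons, $\theta^s$ carries $\Quot^s_X(3,0,\Pic^{(-1)}(X),\Ps_i)$ bijectively onto $S_X(3,0,\Ps_i)$ and $Q_i\cap\Quot^s$ onto $S_X(3,0,\Ps^+_i)$. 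An injective morphism out of an irreducible variety preserves dimension, so $\dim S_X(3,0,\Ps_i)=\dim Q_i$ (namely $5$ for $i=2$, $4$ for $i=3$), and $S_X(3,0,\Ps_i)$ and $S_X(3,0,\Ps^+_i)$ are irreducible as continuous images of irreducible spaces. Properness of $\theta^s$ makes $S_X(3,0,\Ps^+_i)=\theta^s(Q_i\cap\Quot^s)$ closed in $\M^s_X(3,0)$, hence quasi-projective, and it equals $\overline{S_X(3,0,\Ps_i)}$ since $S_X(3,0,\Ps_i)$ is the image of a dense subset; finally $S_X(3,0,\Ps_i)$ is the complement in $S_X(3,0,\Ps^+_i)$ of the union of the strictly larger strata, which is closed by Shatz's theorem, so $S_X(3,0,\Ps_i)$ is open in $S_X(3,0,\Ps^+_i)$, quasi-projective, of the same dimension.

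For $i=4$: if $\HNP(F^*_X(\E))=\Ps_4$, the minimal Harder--Narasimhan quotient of $F^*_X(\E)$ is a line bundle $N$ with $\deg N=-2$, so by adjunction $\E\to{F_X}_*(N)$ is non-zero; since ${F_X}_*(N)$ is stable of rank $3$ and degree $0$ (Lemma \ref{Lem:Sun}(3), using $\deg{F_X}_*(N)=\deg N+2=0$), this map is an isomorphism. Conversely, for every $N\in\Pic^{(-2)}(X)$ Corollary \ref{Cor:NonSplitting} shows the canonical subquotients $N\otimes\Omg^{\otimes j}_X$ $(j=0,1,2)$ have slopes $-2,0,2$, whence $\HNP(F^*_X({F_X}_*(N)))=\Ps_4$. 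Thus $N\mapsto[{F_X}_*(N)]$ is a bijection $\Pic^{(-2)}(X)\to S_X(3,0,\Ps_4)=S_X(3,0,\Ps^+_4)$, in fact an isomorphism of varieties (it is a morphism via the relative pushforward, with inverse $[\E]\mapsto$ minimal HN quotient of $F^*_X(\E)$, a morphism on the stratum where the polygon is constant), so $S_X(3,0,\Ps_4)\cong\Pic^{(-2)}(X)\cong\Jac_X$ is smooth, projective, irreducible of dimension $2$; equivalently this is the case $(r,p,d)=(1,3,0)$ of Theorem \ref{GeneralStratum}. Granting the cited propositions, the argument is essentially bookkeeping, and the delicate points are $(i)$ the density of the stable locus of the Quot scheme in each $\Ps_i$-stratum, which rests on the irreducibility in Proposition \ref{DimofQuot} and, for $i=3$, on the $\Ps_4$-stratum being a proper (lower-dimensional) closed subscheme, and $(ii)$ keeping straight that $\theta^s$ is injective over the $\Ps_2$- and $\Ps_3$-strata but has $1$-dimensional fibers (curves isomorphic to $X$) over the $\Ps_4$-stratum, which is precisely why $\dim S_X(3,0,\Ps_4)=2<3=\dim\Quot^s_X(3,0,\Pic^{(-1)}(X),\Ps_4)$ and forces the separate treatment of $\Ps_4$.
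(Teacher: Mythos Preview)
Your proposal is correct and follows essentially the same strategy as the paper: the duality involution $[\E]\mapsto[\E^\vee]$ for part~(1), transport of the Quot-scheme stratification (Proposition~\ref{DimofQuot}) through the proper morphism $\theta^s$ for $i=2,3$ via Proposition~\ref{Prop:InjMorphism}, and identification of $S_X(3,0,\Ps_4)$ with $\Jac_X$ via Frobenius push-forward for $i=4$. Your write-up is in places more explicit than the paper's (the partial-order check among the $\Ps_i$, the non-emptiness of the stable locus in each $Q_i$, the remark on $1$-dimensional fibres over the $\Ps_4$-stratum), but the logical skeleton is identical.
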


\begin{proof}
(1). The morphism $$\iota:\M^s_X(3,0)\rightarrow\M^s_X(3,0)$$ $$[\E]\mapsto[\E^\vee]$$ induces an involution of $\M^s_X(3,0)$, which maps $S_X(3,0,\Ps^+_1)$ (resp. $S_X(3,0,\Ps_1)$) onto $S_X(3,0,\Ps^+_2)$ (resp. $S_X(3,0,\Ps_2)$). Hence we have $$S_X(3,0,\Ps^+_1)\cong S_X(3,0,\Ps^+_2),$$ $$S_X(3,0,\Ps_1)\cong S_X(3,0,\Ps_2).$$ The fact $S_X(3,0,\Ps^+_1)\cap S_X(3,0,\Ps^+_2)=S_X(3,0,\Ps^+_3)$ is followed by the classification of Harder-Narasimhan polygons of Frobenius pull backs of Frobenius destabilized stable vector bundles in the case $(p,g,r,d)=(3,2,3,0)$.

(2). By Proposition \ref{Prop:Subsheaf} and the openness of stability of vector bundles, we have $\Quot^s_X(3,0,\Pic^{(-1)}(X),\Ps^+_i)$ are non-empty open subvarieties of $\Quot_X(3,0,\Pic^{(-1)}(X),\Ps^+_i)$ for $2\leq i\leq 4$. Hence, $\Quot^s_X(3,0,\Pic^{(-1)}(X),\Ps^+_i)$ are smooth quasi-projective varieties for $2\leq i\leq 4$.

The morphism
\begin{eqnarray*}
\theta^s:\Quot^s_X(3,0,\Pic^{(-1)}(X))&\rightarrow&\M^s_X(3,0)\\
{[\E\hookrightarrow{F_X}_*(\Ls)]}&\mapsto&[\E]
\end{eqnarray*}
maps $\Quot^s_X(3,0,\Pic^{(-1)}(X),\Ps^+_i)$ onto $S_X(3,0,\Ps^+_i)$ for $2\leq i\leq 4$. Then by Proposition \ref{DimofQuot} and properness of $\theta^s$, we have $S_X(3,0,\Ps^+_i)$ are irreducible quasi-projective varieties for $2\leq i\leq 4$. Since $S_X(3,0,\Ps_i)$ is an open subvariety of $S_X(3,0,\Ps^+_i)$, we have $$S_X(3,0,\Ps^+_i)=\overline{S_X(3,0,\Ps_i)}$$ for any $2\leq i\leq 4$. Moreover, by Proposition \ref{Prop:InjMorphism}, the injection $\theta^s|_{\Quot^{s,\sharp}_X(3,0,\Pic^{(-1)}(X))}$ maps $\Quot^s_X(3,0,\Pic^{(-1)}(X),\Ps_i)$ onto $S_X(3,0,\Ps_i)$ for $i=2,3$. Then by Proposition \ref{DimofQuot}, we have
$$\dim S_X(3,0,\Ps^+_i)=\dim S_X(3,0,\Ps_i)=\dim\Quot^s_X(3,0,\Pic^{(-1)}(X),\Ps_i)=
\begin{cases}
5& i=2\\
4& i=3
\end{cases}$$
As $S_X(3,0,\Ps^+_1)\cong S_X(3,0,\Ps^+_2)$, so $S_X(3,d,\Ps^+_1)=\overline{S_X(3,0,\Ps^+_1)}$ is an irreducible quasi-projective variety and $\dim S_X(3,d,\Ps^+_1)=\dim S_X(3,d,\Ps_1)=5$.

Now we study the properties of stratum $S_X(3,0,\Ps_4)$. Any $[\E]\in S_X(3,0,\Ps_4)$ has the form ${F_X}_*(\Ls')$ for some line bundle $\Ls'$ of $\deg(\Ls')=-2$ by \cite[Lemma 3.1]{Li14} (or Lemma \ref{Lemma:CanPolygon}). Moreover, by Lemma \ref{FrobImage}, the morphism
\begin{eqnarray*}
P^s_{\Frob}:\M^s_X(1,-2)&\rightarrow&\M^s_X(3,0)\\
{[\Ls']}&\mapsto&[{F_X}_*(\Ls')]
\end{eqnarray*}
is a closed immersion and the image of $P^s_{\Frob}$ is just the $S_X(3,0,\Ps_4)$. Thus $S_X(3,0,\Ps_4)=S_X(3,0,\Ps^+_4)$ is isomorphic to Jacobian variety $\mathrm{Jac}_X$ of $X$ which is a smooth irreducible projective variety of dimension $2$.
\end{proof}

\begin{Lemma}[L. Li \cite{Li14}]\label{FrobImage}
Let $k$ be an algebraically closed field of characteristic $p>0$, $X$ a smooth projective curve of genus $g\geq 2$ over $k$. Then the well-defined set-theoretic map
\begin{eqnarray*}
P^s_{\Frob}:\M^s_X(r,d)&\rightarrow&\M^s_X(rp,d+r(p-1)(g-1))\\
{[\E]}&\mapsto&[{F_X}_*(\E)]
\end{eqnarray*}
is a closed immersion.
\end{Lemma}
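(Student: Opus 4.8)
The strategy is to establish four properties of $P^s_{\Frob}$: that it is a morphism of varieties, that it is proper, that it is injective on closed points, and that it is unramified. Granting these, $P^s_{\Frob}$ is a closed immersion: being unramified it is quasi-finite, hence (being proper) finite; its fibres are then finite, reduced, and with trivial residue extensions by injectivity and unramifiedness, so Nakayama's lemma applied to the local rings shows that the structure sheaf surjects onto its image, which is closed because a finite morphism is closed.

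\emph{The map is a morphism, and is proper.} Let $\E$ be an $S$-flat family of semistable bundles of rank $r$ and degree $d$ on $X$. Since the absolute Frobenius $F_X$ is finite and flat (indeed the identity on the underlying topological space), $(\Id_S\times F_X)_*(\E)$ is again $S$-flat, with fibre ${F_X}_*(\E_s)$ over $s\in S$; by Lemma \ref{Lem:Sun}(3) this fibre is stable (resp.\ semistable) whenever $\E_s$ is, and the projection formula ${F_X}_*(\F)\otimes\Ox_X(n)\cong{F_X}_*(\F\otimes\Ox_X(pn))$ together with $\chi({F_X}_*(\F))=\chi(\F)$ shows the fibres all have rank $rp$ and degree $d+r(p-1)(g-1)$. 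By the moduli property of $\M^{ss}_X$ this produces a morphism $P_{\Frob}:\M^{ss}_X(r,d)\rightarrow\M^{ss}_X(rp,d+r(p-1)(g-1))$ of projective varieties, hence a proper morphism, and $P^s_{\Frob}$ is its restriction to stable loci. Exactness of ${F_X}_*$ and the slope formula $\mu({F_X}_*(\G))=\tfrac1p\big(\mu(\G)+(p-1)(g-1)\big)$ show that ${F_X}_*(\E)$ is strictly semistable whenever $\E$ is; combined with Lemma \ref{Lem:Sun}(3) this gives $P_{\Frob}^{-1}(\M^s_X(rp,d+r(p-1)(g-1)))=\M^s_X(r,d)$, so $P^s_{\Frob}$ is the base change of the proper morphism $P_{\Frob}$ along an open immersion, hence proper.

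\emph{Injectivity on closed points.} If ${F_X}_*(\E)\cong{F_X}_*(\E')$ then $F^*_X{F_X}_*(\E)\cong F^*_X{F_X}_*(\E')$; by Lemma \ref{Lem:Sun}(4) the canonical filtration of $F^*_X{F_X}_*(\E)$ is its Harder--Narasimhan filtration, which is canonical, so the isomorphism respects it. By Lemma \ref{Lem:Sun}(2) with $l=0$ the top graded piece $V_0/V_1$ of this filtration is $\E$ (resp.\ $\E'$), whence $\E\cong\E'$.

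\emph{Unramifiedness, and the main difficulty.} The tangent space of $\M^s_X(r,d)$ at $[\E]$ is $\mathrm{Ext}^1_{\Ox_X}(\E,\E)$, and since ${F_X}_*$ is exact the differential of $P^s_{\Frob}$ at $[\E]$ is the induced map $\mathrm{Ext}^1_{\Ox_X}(\E,\E)\rightarrow\mathrm{Ext}^1_{\Ox_X}({F_X}_*(\E),{F_X}_*(\E))$. The adjunction $F^*_X\dashv{F_X}_*$ (valid on derived categories since $F_X$ is finite flat) identifies the target with $\mathrm{Ext}^1_{\Ox_X}(F^*_X{F_X}_*(\E),\E)$, and naturality of the counit $\mathrm{ev}:F^*_X{F_X}_*(\E)\twoheadrightarrow\E$ identifies the differential with the pullback map $\mathrm{ev}^*$. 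Applying $\mathrm{Hom}_{\Ox_X}(-,\E)$ to $0\rightarrow V_1\rightarrow F^*_X{F_X}_*(\E)\xrightarrow{\mathrm{ev}}\E\rightarrow0$, we see that $\ker(\mathrm{ev}^*)$ is the image of a map from $\mathrm{Hom}_{\Ox_X}(V_1,\E)$. By Lemma \ref{Lem:Sun}(2) the sheaf $V_1$ is filtered with graded pieces $\E\otimes\Omg^{\otimes l}_X$, $1\leq l\leq p-1$, so $\mathrm{Hom}_{\Ox_X}(V_1,\E)$ vanishes once each $\mathrm{Hom}_{\Ox_X}(\E\otimes\Omg^{\otimes l}_X,\E)=H^0(X,\mathrm{End}(\E)\otimes\Omg^{\otimes -l}_X)$ does; since $\E$ is stable, $\mathrm{End}(\E)$ is semistable of slope $0$, hence $\mathrm{End}(\E)\otimes\Omg^{\otimes -l}_X$ is semistable of negative slope (as $\deg\Omg^1_X=2g-2>0$) and has no nonzero global sections. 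Thus $\mathrm{ev}^*$ is injective at every closed point, $P^s_{\Frob}$ is unramified, and therefore a closed immersion. I expect the main obstacle to be the middle step: making precise that ${F_X}_*$ of a flat family descends to an honest morphism of coarse moduli spaces and verifying the identity $P_{\Frob}^{-1}(\M^s)=\M^s$ that lets properness pass from $\M^{ss}$ to $\M^s$; the identification of the differential with $\mathrm{ev}^*$ is the conceptual crux, but once the derived adjunction is in place the vanishing $\mathrm{Hom}_{\Ox_X}(V_1,\E)=0$ is immediate from Lemma \ref{Lem:Sun}(2) and stability.
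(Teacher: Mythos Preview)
The paper does not prove this lemma at all: it is stated with attribution to \cite{Li14} and no proof is given in the present paper. There is therefore no in-paper argument to compare your proposal against.

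That said, your proof is correct and is the natural one. The three ingredients --- properness by extending to $\M^{ss}$ and then restricting, injectivity on closed points via the canonical filtration of $F^*_X{F_X}_*(\E)$ (Lemma~\ref{Lem:Sun}(4) identifies it with the Harder--Narasimhan filtration, and Lemma~\ref{Lem:Sun}(2) with $l=0$ recovers $\E$ as the top quotient), and unramifiedness via the vanishing $\mathrm{Hom}_{\Ox_X}(V_1,\E)=0$ coming from the filtration of $V_1$ by $\E\otimes\Omg_X^{\otimes l}$ with $l\geq 1$ --- are exactly the expected ones, and this is presumably how \cite{Li14} proceeds as well. Your self-identified ``main obstacle'' (descent of ${F_X}_*$ on families to a morphism of coarse moduli spaces, and compatibility with S-equivalence so that $P_{\Frob}^{-1}(\M^s)=\M^s$) is real but routine: ${F_X}_*$ is exact and sends a Jordan--H\"older filtration of $\E$ to one of ${F_X}_*(\E)$ with stable graded pieces of a common slope (by Lemma~\ref{Lem:Sun}(3) and the slope formula you wrote down), so S-equivalence is preserved and strictly semistable bundles go to strictly semistable bundles.
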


For any integer $d$ with $3|d$, we can construct an isomorphism of $$\M^s_X(3,d)\cong\M^s_X(3,0)$$ by tensoring any given line bundle of degree $\frac{d}{3}$. Then the Frobenius stratification of $\M^s_X(3,d)$ is easily deduced from $\M^s_X(3,0)$, when $X$ is a smooth projective curve of genus $2$ over an algebraically closed field $k$ of characteristic $3$.

Now, we will study the geometric properties of a specific Frobenius stratum in the moduli spaces of stable vector bundles of higher rank. We first generalize \cite[Lemma 3.1]{Li14} to the higher rank case.

Fix a quadruple $(p,g,r,d)$, we construct a convex polygon $\Ps^{\mathrm{can}}_{rp,d}\in\ConPgn(r,pd)$ as follows:
$$\Ps^{\mathrm{can}}_{rp,d}:~\text{with vertexes}~\Big(i\cdot r,d\cdot i+r\cdot i(p-i)(g-1)\Big)~\text{for}~0\leq i\leq p.$$
It is easy to check that the difference between the slopes of two successive line segments is $2g-2$.

\begin{Lemma}[C. Liu, M. Zhou \cite{LiuZhou13}]\label{Lemma:CanPolygon}
Let $k$ be an algebraically closed field of characteristic $p>0$, $X$ a smooth projective curve of genus $g\geq 2$ over $k$. Let $\E$ be a stable vector bundle of rank $rp$ on $X$. Then the following statements are equivalent:
\begin{itemize}
    \item[$(i)$] $\HNP(F^*_X(\E))=\Ps^{\mathrm{can}}_{rp,d}$.
    \item[$(ii)$] $\mu_{\mathrm{max}}(F^*_X(\E))-\mu_{\mathrm{min}}(F^*_X(\E))=(p-1)(2g-2)$.
    \item[$(iii)$] There exists a stable vector bundle $\F$ such that $\E={F_X}_*(\F)$.
\end{itemize}
\end{Lemma}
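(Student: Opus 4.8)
The plan is to prove the cyclic chain of implications $(iii)\Rightarrow(i)\Rightarrow(ii)\Rightarrow(iii)$. The first two implications are essentially bookkeeping with slopes and degrees, and all the real content of the lemma sits in $(ii)\Rightarrow(iii)$; I would therefore dispatch the first two quickly and concentrate on the last.

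For $(iii)\Rightarrow(i)$, suppose $\E\cong{F_X}_*(\F)$ with $\F$ stable of rank $r$. By Lemma~\ref{Lem:Sun}(2) and (4) (equivalently Corollary~\ref{Cor:NonSplitting}), the canonical filtration $0=V_p\subset V_{p-1}\subset\cdots\subset V_0=F^*_X(\E)$ is the Harder--Narasimhan filtration of $F^*_X(\E)$, with graded pieces $V_l/V_{l+1}\cong\F\otimes\Omg^{\otimes l}_X$ for $0\le l\le p-1$. Writing $\deg(\F)=d_0$ and using $\chi$--invariance under the finite flat morphism $F_X$, one gets $d=\deg{F_X}_*(\F)=d_0+r(p-1)(g-1)$, so $d_0=d-r(p-1)(g-1)$. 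A short summation $\deg(V_{p-i})=\sum_{l=p-i}^{p-1}\deg(\F\otimes\Omg^{\otimes l}_X)$ then gives $\rk(V_{p-i})=ir$ and $\deg(V_{p-i})=d\cdot i+r\cdot i(p-i)(g-1)$, i.e. the vertices of $\HNP(F^*_X(\E))$ are exactly the prescribed vertices of $\Ps^{\mathrm{can}}_{rp,d}$. For $(i)\Rightarrow(ii)$, if $\HNP(F^*_X(\E))=\Ps^{\mathrm{can}}_{rp,d}$ then $\mu_{\mathrm{max}}(F^*_X(\E))$ is the slope of the first segment of $\Ps^{\mathrm{can}}_{rp,d}$, namely $\tfrac{d}{r}+(p-1)(g-1)$, and $\mu_{\mathrm{min}}(F^*_X(\E))$ is the slope of the last segment, namely $\tfrac{d}{r}-(p-1)(g-1)$; subtracting yields $(ii)$.

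The main work is $(ii)\Rightarrow(iii)$. Let $0=\E_0\subset\E_1\subset\cdots\subset\E_m=F^*_X(\E)$ be the Harder--Narasimhan filtration, with slopes $\mu_1>\cdots>\mu_m$ and gaps $\delta_i:=\mu_i-\mu_{i+1}$. By Lemma~\ref{Lemma:InsHN}(1) each $0<\delta_i\le 2g-2$, so from $\sum_{i=1}^{m-1}\delta_i=\mu_1-\mu_m=(p-1)(2g-2)$ we get $m\ge p$. The key structural input is that equality in Lemma~\ref{Lemma:InsHN}(2) forces the opposite inequality: the canonical connection $\nabla_{\mathrm{can}}$ on $F^*_X(\E)$ has vanishing $p$-curvature and is flat, no $\E_i$ with $i<m$ is $\nabla_{\mathrm{can}}$-stable (otherwise, by Cartier descent, $\E_i\cong F^*_X(W)$ for a subbundle $W\subset\E$ with $\mu(W)=\mu(\E_i)/p>\mu(\E)$, contradicting semistability of $\E$), and in the extremal case the $\Ox_X$-linear maps $\E_i/\E_{i-1}\to(\E_{i+1}/\E_i)\otimes\Omg^1_X$ induced by $\nabla_{\mathrm{can}}$ are forced to be isomorphisms — this "maximally nilpotent" shape of $\nabla_{\mathrm{can}}$ on the graded is the content of the work of N.~I.~Shepherd-Barron \cite{Shepherd-Barron98}, V.~Mehta--C.~Pauly \cite{MehtaPauly07} and X.~Sun \cite{Sun08i} underlying Lemma~\ref{Lemma:InsHN}, with the rank-$rp$ bookkeeping carried out in \cite{LiuZhou13}. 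It yields $m=p$, all $\delta_i=2g-2$, all graded ranks equal to $r$, and $\mathrm{gr}(F^*_X(\E))\cong\bigoplus_{l=0}^{p-1}\F\otimes\Omg^{\otimes l}_X$ where $\F:=\E_m/\E_{m-1}$. Since $\mu(F^*_X(\E))=\tfrac{d}{r}$ is the average of the $\mu_i$, we get $\mu_m=\tfrac{d}{r}-(p-1)(g-1)$, hence $\deg(\F)=d-r(p-1)(g-1)$. By Lemma~\ref{Lem:Sun}(3), ${F_X}_*(\F)$ is stable, and by $\chi$--invariance its rank is $rp$ and its degree is $\deg(\F)+r(p-1)(g-1)=d$, so it has the same rank, degree and slope as $\E$. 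Finally, the Harder--Narasimhan quotient $F^*_X(\E)\twoheadrightarrow\E_m/\E_{m-1}=\F$ is adjoint to a nonzero homomorphism $\E\to{F_X}_*(\F)$, and a nonzero homomorphism between stable bundles of equal slope is an isomorphism; hence $\E\cong{F_X}_*(\F)$, which proves $(iii)$ (and $\F$ is automatically stable since ${F_X}_*$ is faithful and exact, so a destabilizing subsheaf of $\F$ would produce one of ${F_X}_*(\F)\cong\E$).

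The hard part is exactly the structural step isolated above: upgrading the crude slope estimates of Lemma~\ref{Lemma:InsHN} to the statement that at the extremal value $\mu_{\mathrm{max}}-\mu_{\mathrm{min}}=(p-1)(2g-2)$ the Harder--Narasimhan filtration of $F^*_X(\E)$ has length exactly $p$, with all graded pieces of rank $r$ and the canonical connection inducing isomorphisms $\E_i/\E_{i-1}\xrightarrow{\ \sim\ }(\E_{i+1}/\E_i)\otimes\Omg^1_X$. This is where the vanishing of the $p$-curvature of $\nabla_{\mathrm{can}}$ enters in an essential way (a purely numerical argument leaves open, for instance, the possibility $m=p+1$ with two gaps equal to $g-1$), and once it is in hand everything else reduces to the degree computation above together with Cartier descent and Lemma~\ref{Lem:Sun}.
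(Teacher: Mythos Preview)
The paper does not prove this lemma at all; it is stated with attribution to \cite{LiuZhou13} and used as a black box. Your proposal therefore already gives considerably more than the paper does: the implications $(iii)\Rightarrow(i)$ and $(i)\Rightarrow(ii)$ are cleanly and correctly verified from Lemma~\ref{Lem:Sun} and the definition of $\Ps^{\mathrm{can}}_{rp,d}$, and your outline of $(ii)\Rightarrow(iii)$ correctly isolates the one nontrivial input --- that equality $\mu_{\mathrm{max}}-\mu_{\mathrm{min}}=(p-1)(2g-2)$ forces $m=p$, equal graded ranks $r$, and isomorphisms $\E_i/\E_{i-1}\xrightarrow{\sim}(\E_{i+1}/\E_i)\otimes\Omg^1_X$ via $\nabla_{\mathrm{can}}$ --- which you, like the paper, ultimately defer to \cite{Shepherd-Barron98,MehtaPauly07,Sun08i,LiuZhou13}.

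One small correction in your endgame: when you conclude $\E\cong{F_X}_*(\F)$ by invoking ``a nonzero homomorphism between stable bundles of equal slope is an isomorphism'', you do not yet know that ${F_X}_*(\F)$ is stable, since $\F=\E_m/\E_{m-1}$ is at that stage only a Harder--Narasimhan quotient and hence only semistable. The fix is immediate: by Lemma~\ref{Lem:Sun}(3) ${F_X}_*(\F)$ is semistable of the same rank, degree and slope as $\E$, and a nonzero map from the stable bundle $\E$ into a semistable bundle of the same slope is injective (the image is a quotient of $\E$, so of slope $\geq\mu(\E)$, and a subsheaf of ${F_X}_*(\F)$, so of slope $\leq\mu(\E)$), hence an isomorphism by the rank--degree count. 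Your parenthetical then correctly upgrades $\F$ to stable a posteriori.
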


\begin{Theorem}\label{GeneralStratum}
Let $k$ be an algebraically closed field of characteristic $p>0$, $X$ a smooth projective curve of genus $g\geq 2$ over $k$. Then the subset $$V_{rp,d}=\{[\E]\in\M^s_X(rp,d)(k)~|~\HNP(F^*_X(\E))=\Ps^{\mathrm{can}}_{rp,d}\}$$
is a smooth irreducible closed subvariety of dimension $r^2(g-1)+1$ in $\M^s_X(rp,d)$.
\end{Theorem}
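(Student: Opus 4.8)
The plan is to exploit the equivalence in Lemma \ref{Lemma:CanPolygon}, which tells us that $V_{rp,d}$ is precisely the image of the closed immersion $P^s_{\Frob}:\M^s_X(r,d')\rightarrow\M^s_X(rp,d)$ from Lemma \ref{FrobImage}, where $d'=d+r(p-1)(g-1)$ is chosen so that the target degree works out (one checks $\Deg({F_X}_*(\F))=\Deg(\F)+r(p-1)(g-1)$ for a rank-$r$ bundle $\F$, so we need $d'+r(p-1)(g-1)=d$ wait — more precisely, to land in $\M^s_X(rp,d)$ we take $d'=d-r(p-1)(g-1)$). The point of condition $(iii)$ is that a stable bundle $\E$ of rank $rp$ lies in $V_{rp,d}$ if and only if $\E\cong{F_X}_*(\F)$ for some stable $\F$ of rank $r$; equivalently, $[\E]$ is in the image of $P^s_{\Frob}$. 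So first I would invoke Lemma \ref{Lemma:CanPolygon} to identify $V_{rp,d}$ set-theoretically with the image of $P^s_{\Frob}$ restricted to the appropriate moduli space.

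Next I would use Lemma \ref{FrobImage}: since $P^s_{\Frob}$ is a closed immersion, its image is a closed subvariety of $\M^s_X(rp,d)$ isomorphic to $\M^s_X(r,d-r(p-1)(g-1))$. It is classical (Narasimhan–Seshadri, Mumford, etc.) that $\M^s_X(r,e)$ is a smooth irreducible quasi-projective variety of dimension $r^2(g-1)+1$ for any $e$, when $g\geq 2$ — and since $P^s_{\Frob}$ being a closed immersion forces its image to be closed in $\M^s_X(rp,d)$, the dimension and smoothness and irreducibility all transfer directly. This gives $\dim V_{rp,d}=r^2(g-1)+1$ and the stated geometric properties in one stroke.

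The one genuinely delicate point — and the part I would spend the most care on — is checking that the closed subset $V_{rp,d}$ really is cut out scheme-theoretically (or at least that it carries the reduced induced structure matching the image) so that ``isomorphic to $\M^s_X(r,e)$'' is legitimate, and not merely a bijection on $k$-points. Here one needs that $V_{rp,d}$ is a locally closed subvariety: its closedness should come from the Shatz-type semicontinuity (the Harder–Narasimhan polygon of $F^*_X(\E)$ jumps up under specialization), combined with the fact that $\Ps^{\mathrm{can}}_{rp,d}$ is the \emph{maximal} possible polygon (by Lemma \ref{Lemma:InsHN}(2), the spread $\mu_{\max}-\mu_{\min}$ is bounded by $(p-1)(2g-2)$, and equality characterizes $\Ps^{\mathrm{can}}_{rp,d}$), so the stratum where equality holds is in fact \emph{closed} — there is no larger polygon it could degenerate to. Thus $V_{rp,d}=W_X(rp,d)$ in the notation of the introduction is closed, and its identification with the image of the closed immersion $P^s_{\Frob}$ pins down its scheme structure.

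To summarize the order of steps: (1) apply Lemma \ref{Lemma:CanPolygon} to rewrite $V_{rp,d}$ as the locus of $[\E]$ with $\E\cong{F_X}_*(\F)$, $\F$ stable of rank $r$; (2) invoke Lemma \ref{FrobImage} to see this is exactly the image of the closed immersion $P^s_{\Frob}:\M^s_X(r,d-r(p-1)(g-1))\hookrightarrow\M^s_X(rp,d)$; (3) conclude that $V_{rp,d}$, with its induced reduced structure, is isomorphic to $\M^s_X(r,d-r(p-1)(g-1))$, which is smooth, irreducible, of dimension $r^2(g-1)+1$; (4) observe that closedness is consistent with $V_{rp,d}$ being the top Frobenius stratum via Lemma \ref{Lemma:InsHN}(2) and Shatz's theorem. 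The main obstacle, as noted, is the bookkeeping ensuring the set-theoretic image of $P^s_{\Frob}$ is genuinely all of $V_{rp,d}$ and carries the expected variety structure — but Lemma \ref{Lemma:CanPolygon}(iii) is tailor-made for precisely this.
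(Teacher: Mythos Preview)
Your approach is correct and essentially identical to the paper's: invoke Lemma \ref{Lemma:CanPolygon} to identify $V_{rp,d}$ with the image of $P^s_{\Frob}:\M^s_X(r,d-r(p-1)(g-1))\rightarrow\M^s_X(rp,d)$, then apply Lemma \ref{FrobImage} to conclude. Your additional care about scheme structure and the Shatz/maximality argument in step~(4) is more than the paper provides (its proof is two sentences), but the core strategy is the same.
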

\begin{proof}
By Lemma \ref{Lemma:CanPolygon}, we know that $V_{rp,d}$ is precisely the image of the morphism $P^s_{\Frob}:\M^s_X(r,d-r(p-1)(g-1))\rightarrow\M^s_X(rp,d)$.
Then this Theorem is followed by Lemma \ref{FrobImage}.
\end{proof}

\section*{Acknowledgements}
I would like to thank Luc Illusie, Xiaotao Sun, Yuichiro Hoshi, Mingshuo Zhou, Junchao Shentu, Yongming Zhang for their interest and helpful conversations. Yifei Chen gives me a lot of help to improve the presentation of my manuscript. Especially, I express my greatest appreciation to the late Professor Michel Raynaud, who tells me Lemma \ref{Lem:Grothendieck} according to an unpublished note of Alexander Grothendieck.

\bibliographystyle{amsplain}

\end{document}